\numberwithin{equation}{section}
\newtheorem{theorem}{Theorem}[section]
\newtheorem{proposition}[theorem]{Proposition}
\newtheorem{lemma}[theorem]{Lemma}
\newtheorem{remark}[theorem]{Remark}
\newtheorem{corollary}[theorem]{Corollary}
\newtheorem{definition}[theorem]{Definition}
\theoremstyle{definition}
\newcommand{\N}{{\mathbb N}}
\newcommand{\R}{{\mathbb R}}
\newcommand{\dvg}{{\rm div}}
\newcommand{\eps}{\varepsilon}
\newcommand{\calM}{{\mathcal M}}
\newcommand{\set}[1]{\left\{#1\right\}}
\newcommand{\closure}[1]{\overline{#1}}
\newcommand{\bdry}[1]{\partial #1}
\newcommand{\half}{\frac{1}{2}}
\newcommand{\bgset}[1]{\big\{#1\big\}}
\newcommand{\restr}[2]{\left.#1\right|_{#2}}
\newenvironment{enumroman}{\begin{enumerate}

}{\end{enumerate}}
\DeclareMathOperator{\codim}{codim}
\title[Symmetry results for elliptic equations]{On symmetry
results for elliptic \\ equations with convex nonlinearities}
\author[K.\ Perera]{Kanishka Perera}
\author[M.\ Squassina]{Marco Squassina}
\address{Department of Mathematical Sciences \newline\indent
Florida Institute of Technology \newline\indent
150 West University Boulevard,
Melbourne, FL 32901-6975 USA}
\email{kperera@fit.edu}
\address{Dipartimento di Informatica
\newline\indent
Universit\`a degli Studi di Verona
\newline\indent
C\'a Vignal 2, Strada Le Grazie 15, I-37134 Verona, Italy}
\email{marco.squassina@univr.it}
\thanks{The second author was supported by 2009 national MIUR project: {\em ``Variational and Topological
Methods in the Study of Nonlinear Phenomena''}}
\subjclass[2000]{35D99, 35J62, 58E05, 35J70}
\keywords{Semi-linear and quasi-linear elliptic equation, full and partial symmetry}
\begin{document}

\begin{abstract}
We investigate partial symmetry of solutions to semi-linear and quasi-linear elliptic problems with convex nonlinearities, in domains that are either axially symmetric or radially symmetric.
\end{abstract}

\maketitle

\section{Introduction}

\noindent
Let $\Omega$ be a smooth bounded domain in $\R^N$, $N\geq 2$. The goals of this paper are twofold. On the one hand, we extend some symmetry results in axially symmetric domains developed in \cite{pacella,pacellaweth} for the semi-linear elliptic equation with a convex nonlinearity
\begin{equation}
\label{prob-semi}
\begin{cases}
-\Delta u=f(x,u)   & \text{in $\Omega$,} \\
\noalign{\vskip2pt}
\,u=0   & \text{on $\partial\Omega$,}
\end{cases}
\end{equation}
to a framework where the energy functional naturally associated with \eqref{prob-semi} is of class $C^1$ but not of class $C^2$, that is to say when $f$ is continuous but not differentiable in the second argument. We give sufficient conditions for symmetry in terms of the local minimality of zero for certain related functionals (see the precise statements in Proposition~\ref{Proposition 1.1} and Corollary~\ref{Corollary 1.6}). In addition, we shall provide a further application to constrained minimization problems with convex nonlinearities in Theorem \ref{applConstrained}. In the framework of Morse theory, problems with the same level of regularity were investigated in \cite{bartschdegio} exploiting suitable tools of nonsmooth analysis. As pointed out in \cite{bartschdegio}, the extension to the nondifferentiable case is worthwhile for certain problems in mathematical ecology where one has to deal with jumping type nonlinearities. It is well-known that, under stronger assumptions on $\Omega$ and a monotonicity condition on the mapping $|x|\mapsto f(|x|,s)$, symmetry results can be achieved by the celebrated moving plane method (see, e.g., \cite{serr,gnn}). Other partial symmetry results in the framework of symmetrization and polarization theory were obtained in \cite{bwwi,smewil}.

On the other hand, assuming now that $f$ is smooth enough and it grows at infinity sufficiently fast, we obtain some symmetry results for the quasi-linear elliptic problem
\begin{equation}
\label{prob}
\begin{cases}
-\dvg(a(u)Du)+\frac{a'(u)}{2}|Du|^2=f(x,u)   & \text{in $\Omega$,} \\
\noalign{\vskip2pt}
\,u=0   & \text{on $\partial\Omega$,}
\end{cases}
\end{equation}
where $a:\R\to\R$ is smooth, positive and bounded away from zero. To this aim, we use a suitable change of variable procedure, namely, we transform the quasi-linear problem into an associated semi-linear problem $-\Delta v=h(x,v)$, whose nonlinearity $h$ depends both on $a$ and $f$. By investigating the convexity or strict convexity properties of the mapping $s\mapsto h(x,s)$, we can then apply the symmetry results obtained in \cite{pacella,pacellaweth} for the semi-linear case, and finally return to symmetry properties for the original problem (see Theorems~\ref{primapro0}, \ref{primapro-000}, \ref{primapro} and \ref{nodal-quasi} for the precise statements). A similar method has been employed in a recent paper of the second author jointly with F.\ Gladiali \cite{glasqu}, that deals with boundary blow-up solutions. These kinds of quasi-linear problems have been studied since 1995 in the framework of non-smooth critical point theory, being formally associated with (merely) continuous or lower semi-continuous functionals $J:H^1_0(\Omega)\to\R\cup\{+\infty\}$. Some recent applications involving \eqref{prob} have arisen in the study of the so called quasi-linear Schr\"odinger equation (see \cite{CJS} and the references therein). Some other applications can be traced back to differential geometry on manifolds with a general metric depending upon the solution itself. We refer the interested reader to 
the monograph \cite{iobook} of the second author and to the references therein for further details.

\section{Symmetry for semi-linear problems} \label{semilinearsection}

\noindent
Let $\Omega$ be a bounded domain in $\R^N,\, N \ge 2$, that contains the origin and is symmetric with respect to the hyperplane
\[
T = \set{x = (x_1,\dots,x_N) \in \R^N : x_1 = 0},
\]
and let $u_0 \in C^2(\Omega) \cap C(\closure{\Omega})$ be a classical solution of the problem
\begin{equation} \label{1.1}
\left\{\begin{aligned}
- \Delta u & = f(x,u) && \text{in } \Omega\\[5pt]
u & = 0 && \text{on } \bdry{\Omega},
\end{aligned}\right.
\end{equation}
where $f$ is a Carath\'{e}odory function on $\Omega \times \R$ that is even in $x_1$. In this section we study the symmetry properties of $u_0$ with respect to $x_1$ when $f$ is convex in the second variable.

We assume that $f$ satisfies the growth condition
\begin{equation} \label{1.2}
|f(x,t)| \le C \left(|t|^{r-1} + 1\right) \quad \text{for a.a. } x \in \Omega \text{ and all } t \in \R,
\end{equation}
where $C > 0$, $r > 1$, and $r < 2N/(N-2)$ if $N \ge 3$. Then $u_0$ is a critical point of the $C^1$-functional
\begin{equation} \label{1.3}
\Phi(u) = \int_\Omega \half\, |\nabla u|^2 - F(x,u), \quad u \in H^1_0(\Omega),
\end{equation}
where $F(x,t) = \int_0^t f(x,s)\, ds$. So $u = 0$ is a critical point of
\begin{align*}
\Psi(u) =\; & \Phi(u + u_0) - \Phi(u_0)\\[7.5pt]
=\; & \int_\Omega \half\, |\nabla u|^2 + f(x,u_0)\, u - F(x,u + u_0) + F(x,u_0),
\end{align*}
where we have used the fact that $u_0$ solves \eqref{1.1} to write $\int_\Omega \nabla u_0 \cdot \nabla u = \int_\Omega f(x,u_0)\, u$. Set
\[
\Omega_\pm = \bgset{x \in \Omega : x_1 \gtrless 0}, \qquad \Psi_\pm = \restr{\Psi}{H^1_0(\Omega_\pm)},
\]
and note that $u = 0$ is also a critical point of $\Psi_\pm$. We will prove that $u_0$ is even in $x_1$ under assumptions that involve the convexity of $f$ in $t$ and the type of critical point that $\Psi_\pm$ or $\Psi$ has at $u = 0$.

\noindent
Let $\widetilde{x} := (- x_1,x_2,\dots,x_N)$ be the reflection of $x$ on $T$ and let
\[
u_\pm(x) := u_0(\widetilde{x}) - u_0(x), \quad x \in \Omega_\pm.
\]
Since $u_+(x) = - u_-(\widetilde{x})$, if $u_\pm \ge 0$, then $u_+ = 0$ and hence $u_0(\widetilde{x}) = u_0(x)$. Let $u_\pm^- = \max \set{- u_\pm,0}$ be the negative parts of $u_\pm$. We assume
\begin{enumerate}
\item[(C$_1$)] for a.a. $x \in \Omega_\pm$ such that $u_\pm^-(x) \ne 0$, $f(x,\cdot)$ is convex on $[u_0(\widetilde{x}),u_0(x)]$.
\end{enumerate}
In particular, if $u_0 \ge 0$ (resp. $\le 0$), it suffices to assume that $f(x,\cdot)$ is convex on $[0,\max u_0]$ (resp. $[\min u_0,0]$) for a.a. $x \in \Omega$.

\begin{proposition} \label{Proposition 1.1}
Assume \eqref{1.2} and {\em (C$_1$)}. If $u = 0$ is a strict local minimizer of $\Psi_\pm$, then $u_0$ is even in $x_1$. If we have strict convexity in {\em (C$_1$)}, then it suffices to assume that $u = 0$ is a local minimizer of $\Psi_\pm$.
\end{proposition}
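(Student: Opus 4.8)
The plan is to show that the negative parts $u_\pm^-$ both vanish identically; since $u_+(x) = - u_-(\widetilde{x})$, this forces $u_+ \equiv 0$ and hence $u_0(\widetilde{x}) = u_0(x)$, as already observed in the excerpt. I would argue by contradiction, assuming $u_\pm^- \not\equiv 0$ and exhibiting competitors arbitrarily close to the origin at which $\Psi_\pm$ takes values $\le 0$ (resp.\ $< 0$), against the (strict) minimality of $u=0$. The starting observation is that the reflected function $w(x) := u_0(\widetilde{x})$ again solves \eqref{1.1}, because $\Omega$ is symmetric in $T$ and $f$ is even in $x_1$; consequently $u_\pm$ solves $-\Delta u_\pm = f(x,u_0(\widetilde{x})) - f(x,u_0(x))$ on $\Omega_\pm$. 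Moreover $u_\pm^- \in H^1_0(\Omega_\pm)$, since $u_\pm$ is continuous, belongs to $H^1(\Omega_\pm)$, and vanishes on $\bdry{\Omega_\pm}$ (on $T$ because $\widetilde{x}=x$, on $\bdry{\Omega}$ because both $u_0(x)$ and $u_0(\widetilde{x})$ vanish there).

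The key idea is to test $\Psi_\pm$ along the whole segment $\set{- t\, u_\pm^- : t \in [0,1]}$ rather than at the single point $-u_\pm^-$, so that letting $t \to 0^+$ produces nonzero functions arbitrarily close to the origin in $H^1_0(\Omega_\pm)$. Writing $A_\pm := \set{x \in \Omega_\pm : u_\pm^-(x) \ne 0}$, one has $u_0(\widetilde{x}) < u_0(x)$ on $A_\pm$ and the shifted argument is a convex combination,
\begin{equation*}
u_0(x) - t\, u_\pm^-(x) = (1-t)\, u_0(x) + t\, u_0(\widetilde{x}) \in \big[u_0(\widetilde{x}),u_0(x)\big].
\end{equation*}
Set $\gamma(t) := \Psi_\pm(- t\, u_\pm^-)$. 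Since $\Psi_\pm$ is $C^1$ we may differentiate; using the weak form of \eqref{1.1} tested with $u_\pm^-$ to rewrite the Dirichlet term $\int \nabla u_0 \cdot \nabla u_\pm^-$, together with the equation satisfied by $u_\pm$ to handle $\int |\nabla u_\pm^-|^2 = \int_{A_\pm}|\nabla u_\pm|^2$, a direct computation gives $\gamma(0)=0$ and
\begin{equation*}
\gamma'(t) = \int_{A_\pm} \Big[ f\big(x,(1-t)\, u_0(x) + t\, u_0(\widetilde{x})\big) - (1-t)\, f(x,u_0(x)) - t\, f(x,u_0(\widetilde{x})) \Big]\, u_\pm^-(x)\, dx.
\end{equation*}

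The bracket is exactly the convexity defect of $f(x,\cdot)$ at the point $(1-t)u_0(x) + t\,u_0(\widetilde{x})$ of the interval $[u_0(\widetilde{x}),u_0(x)]$, on which $f(x,\cdot)$ is convex for a.a.\ $x \in A_\pm$ by (C$_1$). Hence the bracket is $\le 0$, and since $u_\pm^- \ge 0$ we get $\gamma'(t) \le 0$ on $[0,1]$, so $\gamma(t) \le 0 = \gamma(0)$. If $f$ is strictly convex the defect is strictly negative on $A_\pm$, giving $\gamma(t) < 0$ for $t \in (0,1]$ whenever $|A_\pm|>0$. The conclusion now follows: as $t \to 0^+$ the functions $-t\,u_\pm^-$ tend to $0$ and are nonzero for small $t>0$ (since $u_\pm^- \not\equiv 0$), so $\gamma(t) \le 0$ contradicts $u=0$ being a \emph{strict} local minimizer, while in the strictly convex case $\gamma(t) < 0$ contradicts $u=0$ being a local minimizer. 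In either case $u_\pm^- \equiv 0$, and the evenness of $u_0$ follows as explained above.

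I expect the main obstacle to be the sign analysis of $\gamma'(t)$: one must organize the computation so that the Dirichlet contribution — handled through the weak equations for $u_0$ and $u_\pm$, i.e.\ the identity $\int_{A_\pm}|\nabla u_\pm|^2 = \int_{A_\pm}\big(f(x,u_0(\widetilde{x})) - f(x,u_0(x))\big)\big(u_0(\widetilde{x}) - u_0(x)\big)$ — combines with the potential term into the single convexity defect above, to which (C$_1$) applies pointwise. A secondary technical point is to justify that $u_\pm^-$ is an admissible test function in $H^1_0(\Omega_\pm)$ and that $\gamma$ is differentiable with the stated derivative, both of which rest on the growth condition \eqref{1.2} and the regularity $u_0 \in C^2(\Omega) \cap C(\closure{\Omega})$.
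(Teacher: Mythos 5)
Your proof is correct and takes essentially the same route as the paper: you reproduce the content of Lemma 1.2 (computing $\frac{d}{dt}\Psi_\pm(-tu_\pm^-)$, using the equation for $u_\pm$ tested with $u_\pm^-$ to absorb the Dirichlet term, and recognizing the resulting bracket as the convexity defect controlled by (C$_1$)), and then spell out the contradiction with (strict) local minimality along $-tu_\pm^-$ as $t\to 0^+$, which the paper leaves implicit in calling the proposition ``immediate'' from that lemma. No gaps.
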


\noindent
This proposition is immediate from the lemma below, which implies that $u_\pm^- = 0$.

\begin{lemma} \label{Lemma 1.2}
If \eqref{1.2} and {\em (C$_1$)} hold, then
\begin{multline*}
\frac{d}{dt}\, \Psi_\pm(- tu_\pm^-) = \int_{\Omega_\pm} \big[f(x,(1 - t)\, u_0(x) + tu_0(\widetilde{x}))\\[7.5pt]
- (1 - t)\, f(x,u_0(x)) - tf(x,u_0(\widetilde{x}))\big]\, u_\pm^-(x) \le 0 \quad \forall t \in [0,1].
\end{multline*}
If we have strict convexity in {\em (C$_1$)} and $u_\pm^- \ne 0$, then the strict inequality holds for $t \in (0,1)$.
\end{lemma}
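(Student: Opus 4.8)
The plan is to compute the derivative directly and then eliminate the Dirichlet (gradient) term by invoking the equation satisfied by the reflected solution. First I would differentiate $\Psi_\pm(-tu_\pm^-)$ under the integral sign. Writing out $\Psi_\pm$ and using $\frac{d}{dt}\,F(x,u_0 - tu_\pm^-) = -f(x,u_0 - tu_\pm^-)\,u_\pm^-$, one obtains
\[
\frac{d}{dt}\,\Psi_\pm(-tu_\pm^-) = \int_{\Omega_\pm} t\,|\nabla u_\pm^-|^2 - f(x,u_0)\,u_\pm^- + f(x,u_0 - tu_\pm^-)\,u_\pm^-.
\]
On the support of $u_\pm^-$ one has $u_\pm < 0$, that is $u_0(\widetilde{x}) < u_0(x)$ and $u_\pm^- = u_0(x) - u_0(\widetilde{x})$, so that $u_0 - tu_\pm^- = (1 - t)\,u_0(x) + t\,u_0(\widetilde{x})$; this already accounts for the first term appearing in the claimed integrand.

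The heart of the matter is to rewrite the quadratic term $t\int_{\Omega_\pm}|\nabla u_\pm^-|^2$ in terms of $f$. To this end I would observe that the reflected function $w(x) := u_0(\widetilde{x})$ solves the same equation as $u_0$: since reflection across $T$ is an isometry, $\Delta w(x) = (\Delta u_0)(\widetilde{x})$, and using that $f$ is even in $x_1$ gives $-\Delta w = f(\widetilde{x},w) = f(x,w)$. Consequently $u_\pm = w - u_0$ satisfies $-\Delta u_\pm = f(x,u_0(\widetilde{x})) - f(x,u_0(x))$ on $\Omega_\pm$.

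Next I would check that $u_\pm^- \in H^1_0(\Omega_\pm)$, so that no boundary terms arise upon integrating by parts. Indeed $u_\pm$ vanishes on $\partial\Omega \cap \overline{\Omega_\pm}$ (both $u_0$ and $w$ vanish there, by symmetry of $\Omega$) and on the hyperplane $T \cap \Omega$ (where $\widetilde{x} = x$), whence $u_\pm^-$ vanishes on all of $\partial\Omega_\pm$. Testing the equation for $u_\pm$ against $u_\pm^-$ and using $\nabla u_\pm^- = -\nabla u_\pm$ a.e. on $\set{u_\pm < 0}$ then yields
\[
\int_{\Omega_\pm}|\nabla u_\pm^-|^2 = \int_{\Omega_\pm}\big[f(x,u_0(x)) - f(x,u_0(\widetilde{x}))\big]\,u_\pm^-.
\]
Substituting this identity, multiplied by $t$, into the expression for the derivative and using $tf(x,u_0(x)) - f(x,u_0(x)) = -(1-t)\,f(x,u_0(x))$ collapses the three remaining terms into exactly the asserted integrand
\[
\int_{\Omega_\pm}\big[f(x,(1-t)\,u_0(x) + t\,u_0(\widetilde{x})) - (1-t)\,f(x,u_0(x)) - t\,f(x,u_0(\widetilde{x}))\big]\,u_\pm^-.
\]

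Finally I would read off the sign from (C$_1$). On the support of $u_\pm^-$ the argument $(1-t)\,u_0(x) + t\,u_0(\widetilde{x})$ is a convex combination of the endpoints of $[u_0(\widetilde{x}),u_0(x)]$, so convexity of $f(x,\cdot)$ there forces the bracketed quantity to be $\le 0$; since $u_\pm^- \ge 0$, the integral is $\le 0$. If the convexity in (C$_1$) is strict and $u_\pm^- \ne 0$ on a set of positive measure, then for $t \in (0,1)$ the bracket is strictly negative there, giving the strict inequality. I expect the main obstacle to be the integration-by-parts step: one must justify that $u_\pm^-$ is an admissible test function, in particular its vanishing on $T$ as well as on $\partial\Omega$, and correctly identify the equation solved by the reflected function $w$. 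Once the gradient term is traded for $f$-differences via the equation, what remains is the pointwise convexity inequality.
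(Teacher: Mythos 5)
Your proposal is correct and follows essentially the same route as the paper: derive the equation satisfied by $u_\pm$ (the paper's \eqref{1.4}, which your reflected-function argument makes explicit), test it with $u_\pm^-$ to convert the gradient term into $f$-differences, substitute into the differentiated expression for $\Psi_\pm(-tu_\pm^-)$, and conclude via the convexity in (C$_1$). The only difference is that you spell out details the paper leaves implicit (evenness of $f$ in $x_1$ behind \eqref{1.4}, the admissibility of $u_\pm^-$ as a test function, and the final sign argument), all of which are accurate.
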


\begin{proof}
Since $u_0$ solves \eqref{1.1}, $u_\pm$ solve
\begin{equation} \label{1.4}
\left\{\begin{aligned}
- \Delta u & = f(x,u + u_0) - f(x,u_0) && \text{in } \Omega_\pm\\[5pt]
u & = 0 && \text{on } \bdry{\Omega_\pm},
\end{aligned}\right.
\end{equation}
and testing with $u_\pm^-$ and using $u_\pm(x) + u_0(x) = u_0(\widetilde{x})$ gives
\[
\int_{\Omega_\pm} |\nabla u_\pm^-|^2 = \int_{\Omega_\pm} \big[f(x,u_0(x)) - f(x,u_0(\widetilde{x}))\big]\, u_\pm^-(x).
\]
Substitute into
\[
\frac{d}{dt}\, \Psi_\pm(- tu_\pm^-) = \int_{\Omega_\pm} t\, |\nabla u_\pm^-|^2 - f(x,u_0)\, u_\pm^- + f(x,u_0 - tu_\pm^-)\, u_\pm^-
\]
and note that $f(x,u_0 - tu_\pm^-)\, u_\pm^- = f(x,(1 - t)\, u_0(x) + tu_0(\widetilde{x}))\, u_\pm^-(x)$.
\end{proof}

\noindent
Now we assume that for each $M > 0$, there is a constant $C_M > 0$ such that
\begin{equation} \label{1.5}
|f(x,s) - f(x,t)| \le C_M\, |s - t| \quad \text{for a.a. } x \in \Omega \text{ and all } s, t \in [- M,M],
\end{equation}
and strengthen (C$_1$) to
\begin{enumerate}
\item[(C$_2$)] for a.a. $x \in \Omega_\pm$ such that $u_\pm^-(x) \ne 0$, $f(x,\cdot)$ is convex on $[u_0(\widetilde{x}),2u_0(x) - u_0(\widetilde{x})]$.
\end{enumerate}

\begin{proposition} \label{Proposition 1.3}
Assume \eqref{1.2}, \eqref{1.5}, {\em (C$_2$)}, and that $u_0$ has a critical point on $T \cap \Omega$. If $u_0$ is not even in $x_1$, then
\[
\Psi(su_+^- + tu_-^-) \le 0 \quad \forall (s,t) \in [- 1,1] \times [- 1,1].
\]
If we have strict convexity in {\em (C$_1$)}, then the strict inequality holds for $(s,t) \in (- 1,1) \times (- 1,1) \setminus \set{(0,0)}$.
\end{proposition}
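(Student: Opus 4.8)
The plan is to exploit that $u_+^-$ and $u_-^-$ have disjoint supports, contained in $\closure{\Omega_+}$ and $\closure{\Omega_-}$ respectively, and that both vanish on $T$. First I would note that for any $(s,t)$ the competitor $su_+^- + tu_-^-$ belongs to $\hsob$ (being a sum of two $H^1_0$-functions on disjoint subdomains that vanish on $T$) and that the integrand defining $\Psi$ vanishes wherever its argument is zero, since $\Psi(0)=0$. Because the gradient carries no cross term (disjoint supports) and each subdomain contributes only its own piece, the energy splits as
\[
\Psi(su_+^- + tu_-^-) = \Psi_+(su_+^-) + \Psi_-(tu_-^-).
\]
This reduces the claim to proving $\Psi_\pm(\sigma u_\pm^-) \le 0$ for every $\sigma \in [-1,1]$, with strict inequality for $\sigma \ne 0$ in the strictly convex case.

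For these one-sided estimates I would set $\phi_\pm(\tau) := \Psi_\pm(-\tau u_\pm^-)$, so that $\phi_\pm(0)=0$, and use the derivative formula established in the proof of Lemma~\ref{Lemma 1.2}, which is valid for \emph{all} $\tau$ (the restriction $\tau\in[0,1]$ entered there only to fix the sign):
\[
\phi_\pm'(\tau) = \int_{\Omega_\pm}\big[f(x,(1-\tau)\,u_0(x)+\tau\, u_0(\widetilde{x})) - (1-\tau)\,f(x,u_0(x)) - \tau\, f(x,u_0(\widetilde{x}))\big]\,u_\pm^-(x).
\]
On the support of $u_\pm^-$ one has $u_0(\widetilde{x}) < u_0(x)$. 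For $\tau \in [0,1]$ the point $(1-\tau)\,u_0(x)+\tau\, u_0(\widetilde{x})$ is a convex combination of the endpoints, so convexity of $f(x,\cdot)$ gives $\phi_\pm'(\tau)\le 0$ (this is Lemma~\ref{Lemma 1.2}). The new ingredient is the range $\tau\in[-1,0]$, where $(1-\tau)\,u_0(x)+\tau\, u_0(\widetilde{x})$ is the extrapolated point $u_0(x)+|\tau|\,(u_0(x)-u_0(\widetilde{x}))\in[u_0(x),\,2u_0(x)-u_0(\widetilde{x})]$; this is precisely why the enlarged convexity interval in (C$_2$) is imposed. Writing $u_0(x)$ itself as a convex combination of $u_0(\widetilde{x})$ and this extrapolated point and applying convexity on $[u_0(\widetilde{x}),\,2u_0(x)-u_0(\widetilde{x})]$ reverses the inequality, yielding $\phi_\pm'(\tau)\ge 0$ for $\tau\in[-1,0]$. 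Hence $\phi_\pm$ increases on $[-1,0]$ and decreases on $[0,1]$ from its maximum $\phi_\pm(0)=0$, so $\phi_\pm\le 0$ on $[-1,1]$, and summing gives the non-strict conclusion.

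For the strict statement, strict convexity together with $u_\pm^-\ne 0$ upgrades the above to $\phi_\pm(\tau)<0$ for $\tau\in(-1,1)\setminus\set{0}$. It then remains to rule out the degenerate possibility that one of $u_+^-$, $u_-^-$ vanishes identically (which would leave $\Psi$ equal to zero along a coordinate axis and destroy strictness there), and this is exactly where the hypothesis that $u_0$ has a critical point $p\in T\cap\Omega$ enters. I would argue by contradiction: if, say, $u_-^-\equiv 0$, then the antisymmetric function $w:=u_0 - u_0\circ R$, with $R$ the reflection $x\mapsto\widetilde{x}$, has a fixed sign on $\Omega_+$, is not identically zero since $u_0$ is not even, vanishes on $T$, and solves the linear equation $-\Delta w = c(x)\,w$ with $c\in L^\infty$ bounded by \eqref{1.5}. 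The strong maximum principle forces $w$ to have one strict sign in the interior of $\Omega_+$, and Hopf's boundary-point lemma at $p\in T$ then gives $\partial_{x_1}w(p)\ne 0$; but differentiating the antisymmetry relation $w(\widetilde{x})=-w(x)$ at $p$ shows $\nabla w(p)=(2\,\partial_{x_1}u_0(p),0,\dots,0)$, so $\partial_{x_1}u_0(p)\ne 0$, contradicting that $p$ is a critical point of $u_0$. Thus both $u_\pm^-\ne 0$ and the strict inequality holds on all of $(-1,1)\times(-1,1)\setminus\set{(0,0)}$. The main obstacle is this last step: arranging the strong maximum principle and Hopf's lemma to apply cleanly to $w$ despite the sign-indefinite zeroth-order coefficient $c$, which is handled by exploiting that the relevant extremum value of $w$ is zero.
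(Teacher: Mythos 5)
Your overall strategy is sound and, at the level of the non-strict inequality, essentially reproduces the paper's proof with two reorganizations. The splitting $\Psi(su_+^- + tu_-^-) = \Psi_+(su_+^-) + \Psi_-(tu_-^-)$ by disjoint supports is exactly \eqref{1.7}. For the one-dimensional estimates, the paper combines Lemma~\ref{Lemma 1.2} (giving $\Psi_\pm(-tu_\pm^-)\le 0$ on $[0,1]$) with Lemma~\ref{Lemma 1.5}, which compares $\Psi_\pm(tu_\pm^-)\le\Psi_\pm(-tu_\pm^-)$ by viewing $u_0(x)$ as the midpoint of $u_0(x)-tu_\pm^-(x)$ and $u_0(x)+tu_\pm^-(x)$, both of which lie in the (C$_2$) interval; you instead extend the derivative formula of Lemma~\ref{Lemma 1.2} to $\tau\in[-1,0]$ and obtain monotonicity there by writing $u_0(x)$ as a convex combination of $u_0(\widetilde{x})$ and the extrapolated point. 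These are equivalent uses of (C$_2$), and your version is correct. Likewise, your maximum-principle/Hopf argument showing $u_\pm^-\ne 0$ is precisely the paper's Lemma~\ref{Lemma 1.4} (your $w$ is $-u_+$), which you re-derive rather than cite; note that the critical-point hypothesis supplies $\partial u_0/\partial x_1=0$ on $T\cap\Omega$, which serves simultaneously as the ``$\ge 0$'' and ``$\le 0$'' needed in the two cases of that lemma, so both $u_+^-\ne 0$ and $u_-^-\ne 0$ follow.

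The one real gap is in your strictness claim on $\tau\in(-1,0)$, i.e., for the positive multiples $\sigma u_\pm^-$ with $\sigma\in(0,1)$. There, your inequality $\phi_\pm'(\tau)\ge 0$ was derived from convexity on the enlarged interval $[u_0(\widetilde{x}),\,2u_0(x)-u_0(\widetilde{x})]$ of (C$_2$), whereas the Proposition hypothesizes \emph{strict} convexity only on the smaller (C$_1$) interval $[u_0(\widetilde{x}),u_0(x)]$; the extrapolated point $z=u_0(x)+|\tau|\,(u_0(x)-u_0(\widetilde{x}))$ at which you evaluate $f$ lies outside that smaller interval, so strictness of the reversed inequality does not follow from the hypothesis by mere substitution, and your phrase ``strict convexity upgrades the above'' papers over this mismatch. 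The claim is nonetheless true: if $f(x,\cdot)$ touched the chord from $u_0(\widetilde{x})$ to $z$ at the interior point $u_0(x)$, convexity would force $f(x,\cdot)$ to be affine on all of $[u_0(\widetilde{x}),z]$, hence on $[u_0(\widetilde{x}),u_0(x)]$, contradicting strict convexity there; you need to supply this (standard) convexity lemma. Alternatively, follow the paper's arrangement in \eqref{1.6}: keep the comparison $\Psi_\pm(tu_\pm^-)\le\Psi_\pm(-tu_\pm^-)$ of Lemma~\ref{Lemma 1.5} non-strict, and let strictness enter only through the strict version of Lemma~\ref{Lemma 1.2} on the negative side, which confines the strict-convexity requirement to the (C$_1$) interval exactly as hypothesized.
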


\begin{lemma} \label{Lemma 1.4}
If \eqref{1.2} and \eqref{1.5} hold, then $u_0$ is even in $x_1$ in the following cases:
\begin{enumroman}
\item \label{2.2.i} $u_+ \ge 0$ in $\Omega_+$ and $\partial u_0/\partial x_1 \ge 0$ somewhere on $T \cap \Omega$,
\item \label{2.2.ii} $u_- \ge 0$ in $\Omega_-$ and $\partial u_0/\partial x_1 \le 0$ somewhere on $T \cap \Omega$.
\end{enumroman}
\end{lemma}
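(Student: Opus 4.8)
The plan is to run a maximum-principle argument on the functions $u_\pm$, reading off a linear elliptic equation from \eqref{1.4} and then invoking Hopf's boundary point lemma along $T \cap \Omega$ to tie the sign of the interior normal derivative of $u_\pm$ to the sign of $\partial u_0/\partial x_1$ there.

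Consider case \ref{2.2.i}, so $u_+ \ge 0$ in $\Omega_+$. First I would record that $u_+$ solves \eqref{1.4} and vanishes on $\bdry{\Omega_+}$: indeed $u_+ = 0$ on $T \cap \Omega$ because $\widetilde{x} = x$ there, while on $\bdry{\Omega} \cap \closure{\Omega_+}$ both $u_0(x)$ and $u_0(\widetilde{x})$ vanish by the symmetry of $\Omega$. Next, with $M = \|u_0\|_{L^\infty(\Omega)}$, the Lipschitz bound \eqref{1.5} lets me write the right-hand side of \eqref{1.4} as $f(x,u_+ + u_0) - f(x,u_0) = c(x)\, u_+$ for some $c \in L^\infty(\Omega_+)$ with $|c| \le C_M$; hence $- \Delta u_+ + C_M\, u_+ = (c(x) + C_M)\, u_+ \ge 0$ in $\Omega_+$, so $u_+$ is a nonnegative supersolution of the coercive operator $-\Delta + C_M$.

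Now I would combine the strong maximum principle with Hopf's lemma. By the strong maximum principle (on a component of $\Omega_+$), either $u_+ \equiv 0$ or $u_+ > 0$ there. Pick the point $x_0 \in T \cap \Omega$ at which $\partial u_0/\partial x_1 \ge 0$; since $x_0$ lies in the interior of $\Omega$ and $T$ is flat, $\Omega_+$ satisfies the interior ball condition at $x_0$ and $u_+$ is $C^2$ near $x_0$. Differentiating $u_+(x) = u_0(\widetilde{x}) - u_0(x)$ and using $x_0 \in T$ gives
\[
\frac{\partial u_+}{\partial x_1}(x_0) = -2\, \frac{\partial u_0}{\partial x_1}(x_0) \le 0,
\]
and $\partial/\partial x_1$ is precisely the \emph{interior} normal derivative for $\Omega_+$ along $T$. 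Since $u_+ \ge 0$ and $u_+(x_0) = 0$, Hopf's lemma forbids $u_+ > 0$ on the component touching $x_0$ unless this interior normal derivative is strictly positive; as it is $\le 0$, that component must carry $u_+ \equiv 0$, and with $\Omega_+$ connected (as in the symmetric setting at hand) one gets $u_+ \equiv 0$, i.e. $u_0(\widetilde{x}) = u_0(x)$, which is the asserted evenness. Case \ref{2.2.ii} is identical after replacing $\Omega_+$ by $\Omega_-$: there the interior normal along $T$ is $-\partial/\partial x_1$, so the same computation gives $-\partial u_-/\partial x_1(x_0) = 2\, \partial u_0/\partial x_1(x_0) \le 0$ under $\partial u_0/\partial x_1(x_0) \le 0$, and the same dichotomy forces $u_- \equiv 0$.

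The step I expect to be most delicate is the boundary analysis on $T \cap \Omega$: one must check that $u_\pm$ is regular enough up to $T$ (which holds since $T \cap \Omega$ lies in the interior of $\Omega$ and $u_0 \in C^2(\Omega)$), that the interior ball condition together with the bounded zeroth-order coefficient $c$ legitimately allow Hopf's lemma, and that the bookkeeping of inner versus outer normals is done correctly, so that the hypothesis $\partial u_0/\partial x_1 \ge 0$ (resp. $\le 0$) produces exactly the sign that \emph{contradicts} a strict Hopf inequality. Connectivity of $\Omega_\pm$ is the only global assumption entering, and it is harmless in the symmetric domains considered here.
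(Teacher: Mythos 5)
Your proof is correct and follows essentially the same route as the paper's: strong maximum principle applied to $u_\pm$ as a solution of \eqref{1.4}, followed by Hopf's lemma at the point of $T \cap \Omega$ where the sign hypothesis on $\partial u_0/\partial x_1$ holds, contradicting $\partial u_\pm/\partial x_1 = \mp 2\,\partial u_0/\partial x_1$ there. The only difference is that you spell out what the paper leaves implicit — the linearization $f(x,u_\pm+u_0)-f(x,u_0)=c(x)u_\pm$ with $|c|\le C_M$ furnished by \eqref{1.5}, the shift making the zeroth-order coefficient nonnegative, the inner-normal bookkeeping, and the connectedness of $\Omega_\pm$ (which indeed holds automatically for a connected domain symmetric about $T$) — all of which is sound.
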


\begin{proof}
\ref{2.2.i} We will show that $u_+$ vanishes in $\Omega_+$. Suppose $u_+ > 0$ somewhere. Since $u_+$ solves \eqref{1.4}, then $u_+ > 0$ in $\Omega_+$ by the strong maximum principle and hence $\partial u_+/\partial x_1 > 0$ on $T \cap \Omega$ by the Hopf lemma (it is here that we use \eqref{1.5}). This is a contradiction since $\partial u_+/\partial x_1 = - 2\, \partial u_0/\partial x_1$ on $T \cap \Omega$. Proof in case \ref{2.2.ii} is similar.
\end{proof}

\begin{lemma} \label{Lemma 1.5}
If \eqref{1.2} and {\em (C$_2$)} hold, then $\Psi_\pm(tu_\pm^-) \le \Psi_\pm(- tu_\pm^-)$ for all $t \in [0,1]$.
\end{lemma}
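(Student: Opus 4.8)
The plan is to compute the difference $\Psi_\pm(tu_\pm^-) - \Psi_\pm(-tu_\pm^-)$ directly and show it is nonpositive. Substituting $v = \pm t u_\pm^-$ into $\Psi_\pm$, the quadratic term $\frac{t^2}{2}\int_{\Omega_\pm}|\nabla u_\pm^-|^2$ and the constant $\int_{\Omega_\pm} F(x,u_0)$ are insensitive to the sign of the argument and cancel, leaving
\begin{equation*}
\Psi_\pm(tu_\pm^-) - \Psi_\pm(-tu_\pm^-) = \int_{\Omega_\pm} \big[2t\, f(x,u_0)\, u_\pm^- - F(x,u_0 + tu_\pm^-) + F(x,u_0 - tu_\pm^-)\big].
\end{equation*}
Hence it suffices to prove the pointwise inequality
\begin{equation*}
F(x,u_0 + tu_\pm^-) - F(x,u_0 - tu_\pm^-) - 2t\, f(x,u_0)\, u_\pm^- \ge 0
\end{equation*}
for a.a.\ $x \in \Omega_\pm$; and only the points where $u_\pm^-(x) \ne 0$ contribute, which is precisely where (C$_2$) supplies convexity.

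Next I would fix such a point and set $a = u_0(\widetilde{x})$, $b = u_0(x)$, so that by the definition of $u_\pm^-$ one has $b > a$ and $u_\pm^-(x) = b - a > 0$. The two arguments above become $u_0 + tu_\pm^- = b + t(b-a)$ and $u_0 - tu_\pm^- = b - t(b-a)$; as $t$ ranges over $[0,1]$ these sweep out $[b, 2b-a]$ and $[a,b]$ respectively, both lying in $[a, 2b-a] = [u_0(\widetilde{x}), 2u_0(x) - u_0(\widetilde{x})]$, which is exactly the interval on which (C$_2$) makes $f(x,\cdot)$ convex. I would then put $g(t) = F(x, b + t(b-a)) - F(x, b - t(b-a)) - 2t(b-a)f(x,b)$, observe $g(0) = 0$, and differentiate to obtain $g'(t) = (b-a)\big[f(x,b+t(b-a)) + f(x,b-t(b-a)) - 2f(x,b)\big]$. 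Since $b$ is the midpoint of $b \pm t(b-a)$, midpoint convexity on $[a,2b-a]$ forces the bracket to be nonnegative, so $g' \ge 0$ and therefore $g(t) \ge g(0) = 0$ on $[0,1]$; integrating over $\Omega_\pm$ gives the claim. One can equally avoid differentiation by writing $F(x,b+s) - F(x,b-s) - 2sf(x,b) = \int_0^s [f(x,b+\sigma) + f(x,b-\sigma) - 2f(x,b)]\,d\sigma$ with $s = t(b-a)$ and applying convexity under the integral sign.

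The only delicate point is the interval bookkeeping: one must verify that \emph{both} arguments $u_0 \pm tu_\pm^-$ remain inside the convexity interval for every $t \in [0,1]$. This is exactly why (C$_2$) enlarges the interval of (C$_1$) from $[u_0(\widetilde{x}), u_0(x)]$ to $[u_0(\widetilde{x}), 2u_0(x) - u_0(\widetilde{x})]$, since $u_0 + tu_\pm^-$ reaches $2u_0(x) - u_0(\widetilde{x})$ at $t=1$; it is this enlargement that distinguishes the present lemma from Lemma~\ref{Lemma 1.2}. The remaining ingredients — differentiation under the integral and integrability — are routine and controlled by the growth bound \eqref{1.2}.
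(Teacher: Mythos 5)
Your proof is correct and is essentially the paper's own argument: both hinge on differentiating in $t$ and the midpoint-convexity inequality $2f(x,u_0) \le f(x,u_0 - tu_\pm^-) + f(x,u_0 + tu_\pm^-)$, justified by exactly the interval bookkeeping that (C$_2$) provides ($u_0 - tu_\pm^- \in [u_0(\widetilde{x}),u_0(x)]$ and $u_0 + tu_\pm^- \in [u_0(x),2u_0(x)-u_0(\widetilde{x})]$). The only cosmetic difference is that the paper computes $\frac{d}{dt}\left[\Psi_\pm(tu_\pm^-) - \Psi_\pm(-tu_\pm^-)\right]$ at the level of the functional, whereas you cancel the gradient and constant terms first and run the same $t$-derivative/convexity argument pointwise inside the integrand.
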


\begin{proof}
We have
\begin{multline*}
\frac{d}{dt} \left[\Psi_\pm(tu_\pm^-) - \Psi_\pm(- tu_\pm^-)\right] = \int_{\Omega_\pm} \big[2f(x,u_0) - f(x,u_0 - tu_\pm^-)\\[7.5pt]
- f(x,u_0 + tu_\pm^-)\big]\, u_\pm^- \le 0 \quad \forall t \in [0,1]
\end{multline*}
since for a.a. $x \in \Omega_\pm$ such that $u_\pm^-(x) \ne 0$, $u_0(x) - tu_\pm^-(x) \in [u_0(\widetilde{x}),u_0(x)]$ and $u_0(x) + tu_\pm^-(x) \in [u_0(x),2u_0(x) - u_0(\widetilde{x})]$ for $t \in [0,1]$.
\end{proof}

\begin{proof}[Proof of Proposition \ref{Proposition 1.3}]
Since $\partial u_0/\partial x_1 = 0$ at a critical point of $u_0$ on $T \cap \Omega$, $u_\pm^- \ne 0$ by Lemma \ref{Lemma 1.4}. By Lemmas \ref{Lemma 1.5} and \ref{Lemma 1.2},
\begin{equation} \label{1.6}
\Psi_\pm(tu_\pm^-) \le \Psi_\pm(- tu_\pm^-) \le 0 \quad \forall t \in [0,1].
\end{equation}
Extending $u_\pm^-$ to functions in $H^1_0(\Omega)$ by setting them equal to zero outside $\Omega_\pm$, then
\begin{equation} \label{1.7}
\Psi(su_+^- + tu_-^-) = \Psi_+(su_+^-) + \Psi_-(tu_-^-) \le 0 \quad \forall (s,t) \in [- 1,1] \times [- 1,1]
\end{equation}
since $u_\pm^-$ have disjoint supports. If we have strict convexity in (C$_1$), then the second inequality in \eqref{1.6} is strict for $t \in (0,1)$ and hence the inequality in \eqref{1.7} is strict for $(s,t) \in (- 1,1) \times (- 1,1) \setminus \set{(0,0)}$.
\end{proof}

\noindent
We now specialize to the case where $\Omega$ is either a ball or an annulus centered at the origin $O$ of $\R^N$, and $f(\cdot,t)$ is radial for all $t \in \R$. If $u_0 \ne 0$, then it has a critical point at some $P \in \Omega$, and we may apply Proposition \ref{Proposition 1.3} to any hyperplane containing $O$ and $P$ to get the following

\begin{corollary} \label{Corollary 1.6}
Assume \eqref{1.2}, \eqref{1.5}, and that $f(|x|,\cdot)$ is convex for a.a. $x \in \Omega$. If $P \ne O$ and $u_0$ is not axially symmetric with respect to $OP$, or if $P = O$ and $u_0$ is not radially symmetric, then there is a $2$-dimensional subspace $V \subset H^1_0(\Omega)$ containing sign-definite functions such that $u_0$ is a local maximizer of $\restr{\Phi}{u_0 + V}$. If $f(|x|,\cdot)$ is strictly convex for a.a. $x \in \Omega$, then $u_0$ is a strict local maximizer of $\restr{\Phi}{u_0 + V}$. If $u_0 \ge 0$, the convexity assumptions are needed only on $[0,\infty)$.
\end{corollary}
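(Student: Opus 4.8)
The plan is to obtain this as a direct consequence of Proposition~\ref{Proposition 1.3}, by choosing for each case an appropriate hyperplane of symmetry and then repackaging the estimate of that proposition as a local maximality statement for $\Phi$. First I would record that since $u_0 \ne 0$ vanishes on $\bdry{\Omega}$, it attains a nonzero extremum at an interior point, which is the critical point $P \in \Omega$ furnished in the statement. Next I would observe that for \emph{any} hyperplane $T$ through the origin $O$ the whole machinery of Section~\ref{semilinearsection} applies after a rotation of coordinates sending $T$ to $\set{x_1 = 0}$: the domain $\Omega$ (a ball or annulus centered at $O$) is invariant under such a rotation and under the reflection $x \mapsto \widetilde{x}$, and since $f(\cdot,t)$ is radial we have $f(\widetilde{x},t) = f(x,t)$, so $f$ is even in $x_1$ in these coordinates. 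Finally, convexity of $f(|x|,\cdot)$ gives (C$_2$) (and a fortiori (C$_1$)) for every such $T$, while strict convexity gives strict convexity in (C$_1$).

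The heart of the argument is the choice of $T$. The key geometric fact is that $u_0$ is axially symmetric about $OP$ precisely when it is even with respect to every hyperplane containing the line $OP$, and radially symmetric precisely when it is even with respect to every hyperplane through $O$; this is because the reflections in such hyperplanes generate the orthogonal group acting on the relevant orthogonal complement. Hence, if $P \ne O$ and $u_0$ is not axially symmetric about $OP$, there is a hyperplane $T$ containing $O$ and $P$ with respect to which $u_0$ is not even; and if $P = O$ and $u_0$ is not radially symmetric, there is a hyperplane $T$ through $O = P$ with respect to which $u_0$ is not even. In either case $P$ is a critical point of $u_0$ lying on $T \cap \Omega$, so all hypotheses of Proposition~\ref{Proposition 1.3} are satisfied, and that proposition yields $u_\pm^- \ne 0$ together with
\[
\Psi(su_+^- + tu_-^-) \le 0 \quad \forall (s,t) \in [-1,1] \times [-1,1],
\]
with strict inequality on $(-1,1) \times (-1,1) \setminus \set{(0,0)}$ in the strictly convex case.

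To conclude I would set $V = \mathrm{span}\set{u_+^-,u_-^-}$. Since $u_+^-$ and $u_-^-$ are nonzero and have disjoint supports (in $\Omega_+$ and $\Omega_-$ respectively), $V$ is $2$-dimensional, and since $u_\pm^- \ge 0$ it contains sign-definite functions. Because $\Psi(u) = \Phi(u + u_0) - \Phi(u_0)$, the displayed inequality reads $\Phi(u_0 + v) \le \Phi(u_0)$ for every $v$ in the parallelogram $\set{su_+^- + tu_-^- : (s,t) \in [-1,1] \times [-1,1]}$, which is a neighborhood of $0$ in $V$; thus $u_0$ is a local maximizer of $\restr{\Phi}{u_0 + V}$, and a strict one under strict convexity. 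For the last assertion, when $u_0 \ge 0$ one checks that on the set where $u_\pm^-(x) \ne 0$ one has $0 \le u_0(\widetilde{x}) < u_0(x)$, so the interval $[u_0(\widetilde{x}),2u_0(x) - u_0(\widetilde{x})]$ appearing in (C$_2$) lies in $[0,\infty)$; hence convexity of $f(|x|,\cdot)$ only on $[0,\infty)$ already suffices.

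The main obstacle is the translation in the second paragraph between the rotational symmetry notions in the statement and the family of reflection symmetries that Proposition~\ref{Proposition 1.3} can detect — in particular, ensuring that the hyperplane witnessing the failure of symmetry can be taken to contain the critical point $P$, so that the hypothesis that $u_0$ has a critical point on $T \cap \Omega$ is available for exactly the hyperplanes under consideration.
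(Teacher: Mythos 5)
Your proposal is correct and takes essentially the same route as the paper: the paper obtains Corollary~\ref{Corollary 1.6} precisely by applying Proposition~\ref{Proposition 1.3} to a hyperplane containing $O$ and $P$ with respect to which $u_0$ fails to be even, with $V = \mathrm{span}\set{u_+^-,u_-^-}$. The details you supply --- the reflection-generation argument linking axial/radial symmetry to evenness across all such hyperplanes, the verification that the parallelogram $\set{su_+^- + tu_-^- : (s,t) \in [-1,1]^2}$ is a neighborhood of $0$ in $V$, and the observation that the intervals in (C$_1$)/(C$_2$) lie in $[0,\infty)$ when $u_0 \ge 0$ --- are accurate fillings-in of steps the paper leaves implicit.
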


\noindent
As an application of Corollary \ref{Corollary 1.6}, consider the problem of minimizing the functional $\Phi$ defined in \eqref{1.3} on the closed set
\[
\calM = \set{u \in H^1_0(\Omega) : \int_\Omega G(x,u) = 1},
\]
where $G(x,t) = \int_0^t g(x,s)\, ds$ for some Carath\'{e}odory function $g$ on $\Omega \times \R$ satisfying \eqref{1.2} and \eqref{1.5} with $g$ in place of $f$, such that $g(\cdot,t)$ is radial for all $t \in \R$. Let $u_0 \in C^2(\Omega) \cap C(\closure{\Omega})$ be a minimizer, and assume that $g(\cdot,u_0(\cdot)) \ne 0$. Then there is a neighborhood of $u_0$ in $\calM$ that is a $C^1$-submanifold of $H^1_0(\Omega)$ of codimension $1$, and $u_0$ solves
\[
\left\{\begin{aligned}
- \Delta u & = f(|x|,u) + \lambda\, g(|x|,u) && \text{in } \Omega\\[5pt]
u & = 0 && \text{on } \bdry{\Omega},
\end{aligned}\right.
\]
for some $\lambda \in \R$ by the Lagrange-multiplier rule.

\begin{theorem} \label{applConstrained}
Under the above hypotheses, assume that $f(|x|,\cdot) + \lambda\, g(|x|,\cdot)$ is strictly convex for a.a. $x \in \Omega$ and $g(\cdot,u_0(\cdot))$ is either positive a.e. or negative a.e.\ Then $u_0$ is axially symmetric. If $u_0 \ge 0$, the convexity assumption is needed only on $[0,\infty)$.
\end{theorem}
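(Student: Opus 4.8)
The plan is to regard $u_0$ as a solution of the \emph{unconstrained} semilinear equation $-\Delta u = h(|x|,u)$ with $h := f + \lambda g$, to apply Corollary~\ref{Corollary 1.6} to this equation, and then to transfer the resulting local-maximum information to the constrained problem so as to contradict the minimality of $u_0$. The Lagrange equation is precisely the Euler--Lagrange equation of
\[
\Phi_\lambda(u) := \Phi(u) - \lambda \int_\Omega G(x,u),
\]
whose nonlinearity is $h(|x|,t) = f(|x|,t) + \lambda\, g(|x|,t)$. Since $f$ and $g$ both satisfy \eqref{1.2} and \eqref{1.5}, so does $h$; moreover $h(|x|,\cdot)$ is radial and, by hypothesis, strictly convex, and $u_0 \in C^2(\Omega) \cap C(\closure{\Omega})$. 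Also $u_0 \not\equiv 0$ (otherwise $\int_\Omega G(x,0) = 0 \ne 1$), so $u_0$ has an interior critical point $P$. Thus Corollary~\ref{Corollary 1.6} applies to $u_0$ with $\Phi_\lambda$ in place of $\Phi$. Arguing by contradiction, I assume $u_0$ is \emph{not} axially symmetric; then it is neither axially symmetric about $OP$ (in case $P \ne O$) nor radially symmetric (in case $P = O$, since radial symmetry would imply axial symmetry). In either case the Corollary yields a two-dimensional subspace $V \subset H^1_0(\Omega)$ containing a sign-definite function $w \not\equiv 0$, for which $u_0$ is a \emph{strict} local maximizer of $\restr{\Phi_\lambda}{u_0 + V}$.

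Next I would bring in the constraint. On $\mathcal{M}$ the quantity $\int_\Omega G(x,u)$ is frozen at $1$, so $\Phi_\lambda = \Phi - \lambda$ there; hence $u_0$ is a local minimizer of $\Phi_\lambda$ on $\mathcal{M}$ as well. The key geometric step is transversality of $V$ to $\mathcal{M}$ at $u_0$, i.e.\ that the constraint derivative $v \mapsto \int_\Omega g(x,u_0)\, v$ is a nonzero linear functional on $V$. This is exactly where the sign hypothesis enters: $V$ contains the sign-definite function $w \not\equiv 0$, and since $g(\cdot,u_0(\cdot))$ is positive a.e.\ (or negative a.e.), we get $\int_\Omega g(x,u_0)\, w \ne 0$. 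Because $u \mapsto \int_\Omega G(x,u)$ is $C^1$ on the finite-dimensional affine plane $u_0 + V$ with nonvanishing gradient at $u_0$, the implicit function theorem produces a $C^1$ curve $\tau \mapsto \gamma(\tau) = u_0 + v(\tau)$ lying in $(u_0 + V) \cap \mathcal{M}$ with $\gamma(0) = u_0$ and $v(\tau) \in V \setminus \set{0}$ for all small $\tau \ne 0$.

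Finally I would close the contradiction. For small $\tau \ne 0$, strict local maximality gives $\Phi_\lambda(\gamma(\tau)) = \Phi_\lambda(u_0 + v(\tau)) < \Phi_\lambda(u_0)$; since $\gamma(\tau) \in \mathcal{M}$, I may replace $\Phi_\lambda$ by $\Phi - \lambda$ on both sides to obtain $\Phi(\gamma(\tau)) < \Phi(u_0)$, with $\gamma(\tau) \in \mathcal{M}$ arbitrarily close to $u_0$ in $H^1_0(\Omega)$. This contradicts the minimality of $u_0$ for $\Phi$ on $\mathcal{M}$, so $u_0$ must be axially symmetric. When $u_0 \ge 0$, the argument only ever evaluates $h$ at values in $[0,\infty)$, so strict convexity of $h$ is needed only there, matching the final clause of Corollary~\ref{Corollary 1.6}. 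I expect the main obstacle to be the transversality and curve-construction step: one must verify that the sign-definiteness built into $V$ genuinely forces the constraint to be nondegenerate on $V$, and that the resulting curve stays \emph{exactly} on $\mathcal{M}$ while remaining in $u_0 + V$, so that the identity $\Phi_\lambda = \Phi - \lambda$ along $\gamma$ is legitimate; this is precisely the role of the hypothesis that $g(\cdot,u_0(\cdot))$ be of one sign.
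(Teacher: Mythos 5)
Your proposal is correct and follows essentially the same route as the paper: both define the functional $\widetilde{\Phi}=\Phi-\lambda\int_\Omega G(x,\cdot)$, invoke Corollary~\ref{Corollary 1.6} to make $u_0$ a strict local maximizer on $u_0+V$, use $\widetilde{\Phi}=\Phi-\lambda$ on $\calM$, and exploit the sign-definiteness of functions in $V$ together with the sign condition on $g(\cdot,u_0(\cdot))$ to show $V$ is transverse to $\calM$ at $u_0$. The only difference is cosmetic: where the paper concludes via the dimension count $\dim V>\codim\calM$, you instead run the implicit function theorem on $(u_0+V)\cap\calM$ to produce an explicit curve, which is just a more detailed justification of the same intersection step.
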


\begin{proof}
Suppose $u_0$ is not axially symmetric, and set
\[
\widetilde{\Phi}(u) = \int_\Omega \half\, |\nabla u|^2 - F(x,u) - \lambda\, G(x,u), \quad u \in H^1_0(\Omega).
\]
Then there is a $2$-dimensional subspace $V \subset H^1_0(\Omega)$ containing sign-definite functions such that $u_0$ is a strict local maximizer of $\widetilde{\Phi}|_{u_0 + V}$ by Corollary \ref{Corollary 1.6}. For $u \in \calM$,
\[
\widetilde{\Phi}(u) = \Phi(u) - \lambda \ge \Phi(u_0) - \lambda = \widetilde{\Phi}(u_0)
\]
since $u_0$ minimizes $\Phi|_{\calM}$. Thus, to obtain a contradiction, it suffices to show that every neighborhood of $u_0$ in $u_0 + V$ intersects $\calM$ at a point different from $u_0$. The tangent space to $\calM$ at $u_0$ consists of vectors $u$ such that
\[
\int_\Omega g(x,u_0)\, u = 0,
\]
which then have to change sign since $g(\cdot,u_0(\cdot))$ is either positive a.e.\ or negative a.e. Since $V$ contains sign-definite functions, it follows that $V$ is not tangent to $\calM$ at $u_0$. The desired conclusion then follows since $\dim V > \codim \calM$.
\end{proof}

\noindent
For example, consider the eigenvalue problem
\[
\left\{\begin{aligned}
- \Delta u & = \lambda\, g(|x|,u) && \text{in } \Omega\\[5pt]
u & = 0 && \text{on } \bdry{\Omega},
\end{aligned}\right.
\]
where $g(|x|,\cdot)$ is strictly convex for a.a. $x \in \Omega$. 
If $\int_\Omega g(x,u_0)\, u_0 \ge 0$, we have $\lambda \ge 0$ 
and then Theorem \ref{applConstrained} applies.
The existence of at least one minimizer with foliated Schwarz symmetry can by obtained (without any convexity requirements) by applying the symmetric constrained version of Ekeland's variational principle proved by the second author in \cite[Section 2.4]{london}.

\section{Symmetry for quasi-linear problems}

\noindent
In this section we shall consider the quasi-linear elliptic problem \eqref{prob}
described in the introduction. In order to give a precise characterization of the symmetry
of the solutions to \eqref{prob} in symmetric domains, we shall
convert the (quasi-linear) problem into a corresponding semi-linear
problem through a change of variable procedure involving the globally defined Cauchy problem
\begin{equation}
	\label{cauchy}
g'=\frac{1}{\sqrt{a\circ g}},\qquad g(0)=0.
\end{equation}
Assuming that $a$ is bounded away from zero from below, \eqref{cauchy}
admits a unique globally defined strictly increasing solution $g\in C^{m+1}(\R)$
provided that $a\in C^m(\R)$, for $m\in\N$. Furthermore, $g$ is odd whenever
$a$ is an even function. A simple direct computation shows that $u$ is a $C^2$
smooth solution to \eqref{prob} if and only if $v=g^{-1}(u)$ is a $C^2$ smooth
solution to the semi-linear problem
\begin{equation}
\label{prob-semil}
\begin{cases}
-\Delta v=h(x,v)   & \text{in $\Omega$,} \\
\noalign{\vskip2pt}
\,v=0   & \text{on $\partial\Omega$,}
\end{cases}
\end{equation}
where we have set $h(x,s):=f(x,g(s))a^{-1/2}(g(s))$ for $x\in\Omega$ and $s\in\R$. Formally,
problem~\eqref{prob} is associated with
the non-smooth functional $J$ defined by setting
\begin{equation}
	\label{defJ}
J(u):=\frac{1}{2}\int_\Omega a(u)|Du|^2-\int_\Omega F(x,u)
\end{equation}
while \eqref{prob-semil} is associated with the smoother functional
$I:H^1_0(\Omega)\to\R$ defined by
$$
I(v):=\frac{1}{2}\int_\Omega |Dv|^2-\int_\Omega K(x,v)
$$
where $K(x,s):=F(x,g(s))$ for all $x\in\Omega$ and $s\in\R$.
When $F(x,u)\in L^1(\Omega)$ for a given $u\in H^1_0(\Omega)$, then one can
associate to~\eqref{prob} the lower semi-continuous functional
$J:H^1_0(\Omega)\to\R\cup\{+\infty\}$
which operates as in \eqref{defJ} when $a(u)|Du|^2\in L^1(\Omega)$ while it is $+\infty$
in the opposite case. For $a$ bounded $J:H^1_0(\Omega)\to\R$ is continuous.
It can be shown \cite[Proposition 2.3]{glasqu0} that, assuming
\begin{equation}
\label{condit-growth}
\lim_{|s|\to +\infty}\frac{a(s)} {|s|^k}<\infty,\quad
\lim_{|s|\to +\infty}\frac{|f(x,s)|} {|s|^p}<\infty,\qquad k>1,\,\,\,
1<p<\frac{(k+1)N+2}{N-2}
\end{equation}
uniformly with respect to $x$, for every $\eps>0$, there exists $C_\eps>0$ such that
$$
|h(x,s)|\leq C_\eps+\eps|s|^{(N+2)/(N-2)},
\qquad\text{for all $x\in\Omega$ and all $s\in\R$,}
$$
which implies that $I\in C^1(H^1_0(\Omega))$. If, furthermore,
$s\mapsto h(x,s)$ is $C^1$, under 
\eqref{condit-growth} and similar one for $a'$ and $f'$, 
arguing as in \cite[Proposition 2.3]{glasqu0}, it follows that
for any $\eps>0$, there exists $C_\eps>0$ such that
$$
|h'(x,s)|\leq C_\eps+\eps|s|^{4/(N-2)},
\qquad\text{for all $x\in\Omega$ and all $s\in\R$.}
$$
In turn, for $v\in H^1_0(\Omega)$,
$$
I''(v)(\varphi,\psi)=\int_\Omega D\varphi\cdot D\psi-\int_\Omega h'(x,v)\varphi\psi,
\qquad \forall \varphi,\psi\in H^1_0(\Omega)
$$
is well defined. If, in addition, $u$ is a $C^2(\Omega)\cap C(\overline{\Omega})$
solution to \eqref{prob}, then
$(\varphi,\psi)\mapsto \int_\Omega D\varphi\cdot D\psi-\int_\Omega h'(x,v)\varphi\psi$ is well defined
without assuming growth conditions on $h'$, since $x\mapsto h'(x,v(x))$ is a continuous function on $\overline{\Omega}$,
$v$ being a solution to~\eqref{prob-semil} with $v\in C^2(\Omega)\cap C(\overline{\Omega})$.
\vskip3pt
\noindent
After the above connection between problems~\eqref{prob} and \eqref{prob-semil} is established,
of course one could provide some symmetry results in symmetric domains by using the results
that we have obtained in Section~\ref{semilinearsection}. On the other hand,
we prefer to add stronger regularity assumptions and provide more concrete statements,
by applying directly the results of~\cite{pacella,pacellaweth} by investigating the convexity
properties of the maps $s\mapsto h(x,s)$ and $s\mapsto h'(x,s)$.
\vskip3pt
\noindent
In order to state the main results of this section, the following definition is in order.

\begin{definition}\rm
	\label{morsedef}
For a (smooth) solution $u$ to the quasi-linear problem \eqref{prob}, we put
$$
m(u,J):=m(g^{-1}(u),I),
$$
and we say that $m(u,J)$ is the Morse index of $u$ with respect to $J.$
\end{definition}

\noindent
For a smooth solution $u$ to~\eqref{prob},
the number $m(g^{-1}(u),I)$ appearing in Definition~\ref{morsedef}
is defined, in a classical way, as the supremum of the dimensions of the
linear subspaces $V$ of $H^1_0(\Omega)$ such that the quadratic form
$\varphi\mapsto \int_\Omega |D\varphi|^2-\int_\Omega h'(x,v)\varphi^2$
is negative definite on $V$, where $v:=g^{-1}(u)$.
Defining $m(u,J)$ directly in a reasonable way seems difficult due to the
lack of regularity of $J$.
\vskip4pt
We are now ready to state our results. First, we have the following

\begin{theorem}
	\label{primapro0}
Let $\Omega$ be a domain in $\R^N$, $N\geq 2$, which contains the origin and is
symmetric with respect the hyperplane $\{x_1=0\}$ and convex in the $x_1$-direction.
Let $a(s)=1+|s|^k$ with $k>1$ and let $\psi:\R^N\to \R^+$ be continuous, even in the
$x_1$-variable and increasing in the $x_1$-variable in $\{x\in\Omega:x_1<0\}$.
Then, if $p>k+1$, any (smooth) solution $u$ to the problem
\begin{equation}
	\label{prosym00}
\begin{cases}
-\dvg(a(u)Du)+\frac{a'(u)}{2}|Du|^2=\psi(x)u^{p}   & \text{in $\Omega$,} \\
\noalign{\vskip2pt}
\,u>0   & \text{in $\Omega$,}  \\
\noalign{\vskip2pt}
\,u=0   & \text{on $\partial\Omega$,}
\end{cases}
\end{equation}
is symmetric with respect to $x_1$, that is $u(-x_1,x_2,\dots,x_N)=u(x_1,x_2,\dots,x_N)$.
\end{theorem}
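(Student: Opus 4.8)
\noindent
The plan is to reduce the quasi-linear problem \eqref{prosym00} to the associated semi-linear problem \eqref{prob-semil}, to check that the transformed nonlinearity is convex, even and monotone in $x_1$, and then to invoke the symmetry results of \cite{pacella,pacellaweth}; symmetry of $u$ will follow at once since $u=g(v)$ and $g$ is a fixed strictly increasing diffeomorphism of $\R$. Concretely, I would first set $v:=g^{-1}(u)$, which by the computation recalled before \eqref{prob-semil} solves $-\Delta v=h(x,v)$ in $\Omega$, $v=0$ on $\partial\Omega$, with $h(x,s)=\psi(x)\,\phi(g(s))$ and $\phi(t):=t^p(1+t^k)^{-1/2}$. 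Since $u>0$ in $\Omega$, $g(0)=0$ and $g$ is strictly increasing, we have $v>0$ in $\Omega$, so only the range $t\ge0$ is relevant. The structural hypotheses on $\psi$ transfer immediately: $h(\cdot,s)$ is even in $x_1$ because $\psi$ is, and since $\phi(g(s))\ge0$ for $s\ge0$, for each fixed $s\ge0$ the map $x\mapsto h(x,s)$ is increasing in $x_1$ on $\set{x\in\Omega:x_1<0}$.

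\noindent
The heart of the argument is the convexity of $s\mapsto h(x,s)$ on $[0,+\infty)$. As $\psi\ge0$, this amounts to the convexity of $\Phi:=\phi\circ g$. Using $g'=(1+g^k)^{-1/2}$ from \eqref{cauchy} and the resulting $g''=-\tfrac{k}{2}\,g^{k-1}(1+g^k)^{-2}$, a direct differentiation gives, with $g=g(s)$,
\[
\Phi''(s)=\frac{1}{(1+g^k)^2}\Big[\phi''(g)\,(1+g^k)-\tfrac{k}{2}\,g^{k-1}\,\phi'(g)\Big],
\]
so that convexity reduces to the sign of $B(t):=\phi''(t)(1+t^k)-\tfrac{k}{2}\,t^{k-1}\phi'(t)$ for $t\ge0$. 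A short asymptotic analysis explains the role of the hypothesis $p>k+1$: as $t\to0^+$ one has $B(t)\sim p(p-1)\,t^{p-2}>0$ (here $p>1$), while as $t\to+\infty$ one finds $B(t)\sim(p-\tfrac{k}{2})(p-k-1)\,t^{\,p+k/2-2}$, which is nonnegative exactly when $p\ge k+1$. Thus $p>k+1$ forces $\Phi$ to grow super-linearly and to be strictly convex at both ends of the half-line.

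\noindent
With convexity, evenness and the monotonicity in $x_1$ of $h$ in hand, I would then apply the symmetry theorem of \cite{pacella,pacellaweth} for positive solutions of semi-linear equations in domains symmetric and convex in the $x_1$-direction, concluding that $v$ is even in $x_1$; the claimed identity $u(-x_1,x_2,\dots,x_N)=u(x_1,\dots,x_N)$ then follows from $u=g(v)$. The main obstacle is precisely the convexity step: the asymptotics above only show $B\ge0$ near $0$ and near $+\infty$, and one must establish $B(t)\ge0$ on the whole of $[0,+\infty)$. This requires controlling $\phi'$ and $\phi''$ uniformly---e.g.\ by writing $\phi'/\phi=p/t-\tfrac{k}{2}\,t^{k-1}/(1+t^k)$ and estimating $B(t)/\phi(t)$---and it is here that the threshold $p>k+1$ must be used in a non-asymptotic way. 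Minor additional care is needed near $\partial\Omega$, where $a(s)=1+|s|^k$ is only finitely smooth at $s=0$, but this does not affect the interior convexity computation since $v>0$ in $\Omega$.
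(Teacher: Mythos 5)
Your overall strategy is exactly the paper's: set $v=g^{-1}(u)$, observe that $v$ solves the semi-linear problem \eqref{prob-semil} with $h(x,s)=\psi(x)\phi(g(s))$, $\phi(t)=t^p(1+t^k)^{-1/2}$, check that $h$ inherits from $\psi$ the evenness and the monotonicity in $x_1$ on $\{x_1<0\}$, establish convexity of $s\mapsto h(x,s)$, apply \cite[Propositions 1.1 and 2.1]{pacella} to conclude that $v$ is even in $x_1$, and transfer the symmetry back through the strictly increasing map $g$. This is precisely the paper's proof, with the convexity supplied by Proposition~\ref{conv-h2}.

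However, there is a genuine gap at the step you yourself flag as ``the main obstacle'': you verify the sign of $B(t)=\phi''(t)(1+t^k)-\tfrac{k}{2}t^{k-1}\phi'(t)$ only asymptotically, as $t\to0^+$ and as $t\to+\infty$, and you leave the inequality on the whole half-line unproved. Endpoint asymptotics do not control the sign in between, so as written the heart of the theorem is missing. The point you are missing is that no asymptotic or interpolation argument is needed: the inequality can be verified exactly, because clearing the fractional powers turns it into a polynomial inequality with explicitly sign-definite coefficients. Indeed, multiplying $B(t)$ by $2(1+t^k)^{3/2}$ gives the identity
\[
2(1+t^k)^{3/2}B(t)=\Gamma_1 t^{p+2k-2}+\Gamma_2 t^{p+k-2}+\Gamma_3 t^{p-2},
\]
where $\Gamma_1=2p^2-(2+3k)p+k^2+k$, $\Gamma_2=4p^2-(4+3k)p-k(k-1)$ and $\Gamma_3=2p(p-1)$; this is exactly the quantity the paper works with (its $\Theta$ in formula \eqref{hsecondo} equals $2a^{3/2}B$ up to the factor $\psi$). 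The factorizations
\[
\Gamma_1=(p-k-1)(2p-k),\qquad \Gamma_2=(p-k-1)\Big[4p+\frac{k(p-k+1)}{p-k-1}\Big]
\]
show that all three coefficients are strictly positive precisely under your hypothesis $p>k+1$, so $B>0$ holds termwise on $(0,+\infty)$, giving strict convexity of $s\mapsto h(x,s)$ there (and, extending $f$ by zero for $s<0$ as in \eqref{expotpot}, convexity on all of $\R$, which is the form in which \cite{pacella} is invoked). With this computation inserted, the rest of your argument goes through as you describe. A minor further remark: the paper's Proposition~\ref{conv-h2} assumes $k\ge 2$ so that $a$ is $C^2$ at $s=0$, which is the precise form of the smoothness caveat you raise at the end.
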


\noindent
Secondly, we formulate the following result in radial domains.

\begin{theorem}
	\label{primapro-000}	
Let $\Omega$ be a ball or an annulus in $\R^N$, $N\geq 2$,
$a(s)=1+|s|^k$ with $k>1$, $p>k+1$ and $\psi:\R\to \R^+$
continuous. Consider a (smooth) index-one solution $u$ to the problem
\begin{equation}
	\label{prosym000}
\begin{cases}
-\dvg(a(u)Du)+\frac{a'(u)}{2}|Du|^2=\psi(|x|)u^{p}   & \text{in $\Omega$,} \\
\noalign{\vskip2pt}
\,u>0   & \text{in $\Omega$,}  \\
\noalign{\vskip2pt}
\,u=0   & \text{on $\partial\Omega$.}
\end{cases}
\end{equation}
Let $P\in\Omega$ be a maximum point of $u$ and denote by $r_p$ the axis passing through
the origin and $P$. Then the following facts hold:
\begin{enumerate}
	\item $u$ is axially symmetric with respect to $r_p$;
	\item if $\Omega$ is a ball and $P$ is the origin, then $u$ is radially symmetric;
	\item if $u$ is not radially symmetric, it is never symmetric with respect to any
	$(N-1)$-dimensional hyperplane passing through the origin and not
	passing through the axis $r_p$;
	\item if $u$ is not radially symmetric, all its critical points
	belong to the symmetry axis $r_p$.
\end{enumerate}
\end{theorem}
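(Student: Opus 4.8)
The plan is to reduce \eqref{prosym000} to the semilinear problem \eqref{prob-semil} through the change of variable $v=g^{-1}(u)$, with $g$ the increasing solution of \eqref{cauchy}, and then to invoke the convex-nonlinearity/Morse-index symmetry theory of \cite{pacella,pacellaweth} (equivalently Corollary~\ref{Corollary 1.6}). First I would record that, with $f(x,t)=\psi(|x|)\,t^p$, the transformed nonlinearity is
\[
h(x,s)=\psi(|x|)\,g(s)^p\bigl(1+g(s)^k\bigr)^{-1/2},
\]
which is radial in $x$. Since $g$ is an increasing diffeomorphism with $g(0)=0$, the positivity $u>0$ becomes $v>0$; the identity $\nabla u=g'(v)\,\nabla v$ with $g'>0$ shows that $u$ and $v$ have exactly the same critical points, so $P$ is also the maximum point of $v$; and by Definition~\ref{morsedef} the index-one hypothesis reads $m(v,I)=1$. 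Because $p>k+1$, the growth discussion around \eqref{condit-growth} supplies the subcritical bound needed for \eqref{1.2}, while the smoothness of $g$, $a$ and $t\mapsto t^p$ on $(0,\infty)$ yields the local Lipschitz condition \eqref{1.5} along $v>0$; thus the hypotheses of Corollary~\ref{Corollary 1.6} are in force.

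The crux is to verify that $s\mapsto h(|x|,s)$ is \emph{strictly convex} on $[0,\infty)$. Since $\psi\ge 0$, it suffices to prove that $\phi(s):=g(s)^p\bigl(1+g(s)^k\bigr)^{-1/2}$ has $\phi''>0$ on $[0,\infty)$. Using $g'=(1+g^k)^{-1/2}$ from \eqref{cauchy} to eliminate the derivatives of $g$, one differentiates twice and checks that the resulting expression is positive precisely when $p>k+1$. I expect this to be the main obstacle: the condition $p>k+1$ is clearly the right one asymptotically, since $g(s)\sim c\,s^{2/(k+2)}$ gives $\phi(s)\sim c'\,s^{(2p-k)/(k+2)}$ with exponent strictly above $1$ iff $p>k+1$, but a genuine computation is needed to secure strict convexity \emph{globally} on $[0,\infty)$, including near $s=0$, rather than only for large $s$.

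Granting strict convexity, parts (1) and (2) follow from the index-one hypothesis. For (1), suppose $v$ is not axially symmetric with respect to $r_p=OP$. Corollary~\ref{Corollary 1.6} then produces a two-dimensional subspace $V$ spanned by the sign-definite negative parts $v_+^-,v_-^-$ of $v(\widetilde x)-v(x)$ on the two half-domains, on which $v$ is a strict local maximizer of $I|_{v+V}$. Evaluating the second variation $I''(v)(\varphi,\varphi)=\int_\Omega|D\varphi|^2-\int_\Omega h'(x,v)\varphi^2$ on $V$, along the lines of the test-function computation in Lemma~\ref{Lemma 1.2} but carried out for $I''(v)$ rather than for $\frac{d}{dt}\Psi_\pm$, the strict convexity of $h$ forces $I''(v)(v_\pm^-,v_\pm^-)<0$, while the disjoint supports of $v_+^-$ and $v_-^-$ annihilate the cross term; hence $I''(v)$ is negative definite on $V$ and $m(v,I)\ge 2$, contradicting $m(v,I)=1$. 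Thus $v$, and therefore $u=g(v)$, is axially symmetric about $r_p$. Part (2) is the same argument using the $P=O$ branch of Corollary~\ref{Corollary 1.6}, yielding radial symmetry; all conclusions transfer to $u$ because $g$ is an $x$-independent increasing diffeomorphism.

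Finally, for (3) and (4) I would upgrade axial symmetry to \emph{strict} monotonicity of $v$ in the polar angle $\theta$ measured from $r_p$: applying the strong maximum principle and the Hopf lemma to the angular derivative (just as \eqref{1.5} is used in Lemma~\ref{Lemma 1.4}), strict convexity rules out any interval of constancy in $\theta$ unless $v$ is radial. Strict monotonicity then settles both remaining statements. For (3), invariance of $v$ under reflection in a hyperplane $H\ni O$ with $r_p\not\subset H$ would identify two distinct polar angles carrying equal values of $v$ (this covers also $H\perp r_p$, where the reflection sends $\theta$ to $\pi-\theta$), contradicting strict decrease; equivalently it would produce a second symmetry axis, and two distinct axial symmetries force radiality. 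For (4), strict monotonicity gives $\partial_\theta v\neq0$ for $\theta\in(0,\pi)$, so every critical point of $v$ lies on $r_p$; an off-axis critical point $Q$ would otherwise let Corollary~\ref{Corollary 1.6}, applied to the axis $OQ$, manufacture a second negative-definite two-plane, again violating $m(v,I)=1$. Since $g'>0$, the critical points and all the symmetries of $v$ are inherited by $u$, completing (3) and (4).
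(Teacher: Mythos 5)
Your reduction coincides exactly with the paper's: set $v=g^{-1}(u)$ with $g$ from \eqref{cauchy}, observe that $v$ solves \eqref{prob-semil} with $h(x,s)=\psi(|x|)\,g(s)^p a^{-1/2}(g(s))$, that $g'>0$ transfers critical points, the maximum point $P$, and (by Definition~\ref{morsedef}) the Morse index verbatim, and that everything hinges on strict convexity of $s\mapsto h(x,s)$ on $(0,+\infty)$. The paper then finishes in one stroke by quoting Proposition~\ref{conv-h2} together with \cite[Theorem 3.1 (i)--(iv)]{pacella}; you instead rebuild both ingredients. Your reconstruction of parts (1)--(2) is sound: the second variation $I''(v)$ tested on the span of the disjointly supported negative parts $v_\pm^-$ is negative definite under strict convexity (this is precisely Pacella's index~$\ge 2$ contradiction, and you correctly avoid the false implication ``strict local maximizer $\Rightarrow$ negative definite Hessian'' by redoing the test-function computation). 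But two genuine gaps remain. The first is the convexity itself: you assert that differentiating $\phi=g^p(1+g^k)^{-1/2}$ twice ``checks out precisely when $p>k+1$'' and flag it as the main obstacle, but never do it. This is the paper's Proposition~\ref{conv-h2}, and it is the technical core: via \eqref{hsecondo}, positivity of $h''$ on $(0,+\infty)$ reduces to $\Gamma_1 s^{p+2k-2}+\Gamma_2 s^{p+k-2}+\Gamma_3 s^{p-2}\ge 0$ with $\Gamma_1=(p-k-1)(2p-k)$, $\Gamma_2=(p-k-1)\bigl[4p+k(p-k+1)/(p-k-1)\bigr]$, $\Gamma_3=2p(p-1)$, all positive exactly under $p>k+1$; the asymptotics of $g$ that you compute only give convexity for large $s$, not globally.

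The second and more serious gap concerns parts (3) and (4). Your plan is to upgrade axial symmetry to strict monotonicity of $v$ in the polar angle $\theta$ ``by the strong maximum principle and the Hopf lemma applied to the angular derivative''. But the strong maximum principle applied to $w=\partial_\theta v$ (which does solve the linearized equation) yields $w<0$ only once one already knows $w\le 0$, and no sign of $w$ follows from part (1): axial symmetry alone says nothing about monotonicity in $\theta$. The missing step is the reflection dichotomy for hyperplanes through $O$ that do \emph{not} contain $r_p$: for such $T$, index one plus your own second-variation argument forces either symmetry with respect to $T$ or a strict sign of $v\circ\sigma_T-v$ on each open half, and that sign must be oriented by the maximum point (the half containing $P$ must be the larger one, since otherwise reflecting $P$ would produce a value exceeding $\max v$). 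Only after this orientation step does one obtain $\partial_\theta v\le 0$, and then strictness. Your fallback for (3), ``two distinct axial symmetries force radiality'', is false for $N=2$, which the theorem covers: in the plane, axial symmetry is reflection symmetry in a line, and two reflections in lines meeting at a rational multiple of $\pi$ generate only a dihedral group, which nonradial functions can perfectly well admit; excluding this again requires the maximum-point/dichotomy argument, which produces an open set of symmetry hyperplanes and hence radiality. These are exactly the points contained in \cite[Theorem 3.1 (iii), (iv)]{pacella}, which the paper imports wholesale by citation. A minor further wrinkle: you invoke \eqref{condit-growth} to secure \eqref{1.2}, but the theorem imposes no upper bound on $p$; for bounded classical solutions one should either truncate the nonlinearity or, as the paper observes, note that the quadratic form defining the index requires no growth condition at all.
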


\noindent
In radial domains, we also have the following partial symmetry results. We recall that
a function $u$ is said to be foliated Schwarz symmetric if there exists a unit vector $\xi\in\R^N$
and a function $\eta:\R^+\times\R\to\R$ such that $u(x)=\eta(|x|,x\cdot\xi)$ and $\eta(r,\cdot)$
is nondecreasing, for all $r\geq 0$.

\begin{theorem}
	\label{primapro}
Let $\Omega$ be a ball or an annulus in $\R^N$, $N\geq 2$,
$a(s)=1+|s|^k$ with $k>1$ and let $\psi:\R\to \R^+$
be a continuous function. Then there exists $p_k>2$ such that
for every $p\geq p_k$, any (smooth) solution $u$ to
\begin{equation}
	\label{prosym1}
\begin{cases}
-\dvg(a(u)Du)+\frac{a'(u)}{2}|Du|^2=\psi(|x|)|u|^{p-1}u   & \text{in $\Omega$,} \\
\noalign{\vskip2pt}
\,u=0   & \text{on $\partial\Omega$,}
\end{cases}
\end{equation}
with Morse index $m(u,J)\leq N$ is foliated Schwarz symmetric. Furthermore,
if $\psi$ is constant, then the nodal set of any sign changing solution $u$ of
problem \eqref{prosym1} with Morse index $m(u,J)\leq N$ intersects the boundary $\partial\Omega$.
\end{theorem}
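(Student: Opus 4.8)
The plan is to reduce the quasi-linear problem to its associated semi-linear problem through the change of variable $v=g^{-1}(u)$ of \eqref{cauchy}--\eqref{prob-semil}, and then to invoke the foliated Schwarz symmetry theory of \cite{pacella,pacellaweth}. First I would record that, with $f(x,s)=\psi(|x|)|s|^{p-1}s$, the transformed nonlinearity reads
\[
h(x,s)=\psi(|x|)\,|g(s)|^{p-1}g(s)\,a^{-1/2}(g(s)),\qquad a(s)=1+|s|^k,
\]
that $v\in C^2(\Omega)\cap C(\closure{\Omega})$ solves $-\Delta v=h(x,v)$ in $\Omega$ with $v=0$ on $\bdry{\Omega}$, and that by Definition~\ref{morsedef} the Morse index is preserved, $m(v,I)=m(u,J)\le N$. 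Since $g$ is odd and strictly increasing with $g(0)=0$, while $a$ is even, the map $h(x,\cdot)$ is odd and $h(\cdot,s)$ is radial, so the structural requirements of the semi-linear results are met. I would also keep in mind that $u=g(v)$, where $g$ is an increasing bijection of $\R$ fixing the origin.

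The heart of the matter is to verify, for $p$ large, the convexity hypotheses under which \cite{pacellaweth} forces a solution of Morse index $\le N$ to be foliated Schwarz symmetric. As anticipated in the introduction, this amounts to analyzing the convexity of $s\mapsto h(x,s)$ and of $s\mapsto h'(x,s)$; since $h(x,\cdot)$ is odd and super-linear it cannot be convex on all of $\R$, so in the sign-changing regime the relevant object is the even map $s\mapsto h'(x,s)$. Writing $w=g(s)\ge0$ and repeatedly using $g'=a^{-1/2}(g)$, I would differentiate the profile $\Phi(s):=|g(s)|^{p-1}g(s)\,(1+|g(s)|^k)^{-1/2}$ and, after dividing out the positive factors of the form $w^{p-1}(1+w^k)^{-\nu}$, reduce the desired convexity to a pointwise inequality in $w\in[0,\infty)$ whose coefficient of highest order in $p$ is strictly positive. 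Because this principal part dominates the lower-order terms once $p$ is large, there exists a threshold $p_k>2$, depending only on $k$, such that the inequality holds for all $w\ge0$ whenever $p\ge p_k$; this is precisely the $p_k$ of the statement.

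I expect this uniform convexity estimate to be the main difficulty, since the inequality must be controlled simultaneously as $w\to0^+$ (where $g(s)\sim s$ and the power $w^{p-1}$ dominates), as $w\to+\infty$ (where $g(s)$ grows like $s^{2/(k+2)}$ and the decaying factor $(1+w^k)^{-1/2}$ competes with the power), and across the intermediate range, the distortion introduced by $g$ and by $a^{-1/2}$ having to be absorbed into the principal part. Granting it, the theorem of \cite{pacellaweth} applies to $v$ and produces a unit vector $\xi$ together with a function $\eta$, nondecreasing in its second argument, such that $v(x)=\eta(|x|,x\cdot\xi)$. Setting $\tilde\eta:=g\circ\eta$, still nondecreasing in the second variable because $g$ is increasing, yields $u(x)=g(v(x))=\tilde\eta(|x|,x\cdot\xi)$, so that $u$ is foliated Schwarz symmetric; this proves the first assertion.

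For the last statement, when $\psi$ is constant the semi-linear problem $-\Delta v=h(v)$ is autonomous, and I would invoke the companion result of \cite{pacella,pacellaweth} guaranteeing that the nodal set of a sign-changing solution of Morse index $\le N$ meets $\bdry{\Omega}$. Since $g$ is a strictly increasing bijection of $\R$ with $g(0)=0$, one has $u(x)=0$ if and only if $v(x)=0$, so $u$ and $v$ share the same nodal set and $u$ changes sign exactly when $v$ does. Hence the nodal set of any sign-changing solution $u$ with $m(u,J)\le N$ coincides with that of $v$ and therefore intersects $\bdry{\Omega}$, completing the proof.
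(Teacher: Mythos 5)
Your overall strategy coincides with the paper's own proof: transform via $v=g^{-1}(u)$, use Definition~\ref{morsedef} to carry the Morse index bound over to $v$, verify that $s\mapsto h'(x,s)$ is convex for $p$ large, invoke \cite[Theorems 1.1 and 1.2]{pacellaweth}, and push the foliated Schwarz symmetry (respectively the nodal-set information) back through the increasing map $g$. The two transfer steps at the ends of your argument are correct and essentially identical to what the paper does, including the observation that $u$ and $v$ have the same zero set because $g$ vanishes only at the origin.

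The gap is in the middle step, and it is twofold. First, the convexity of $h'$ for $p$ large is the actual content of this theorem: the paper devotes Proposition~\ref{h1conv-1} to it, computing $h'''$ via \eqref{h3formul} and reducing its positivity to the inequality $\Pi_1(p)a^3(s)+\Pi_2(p)s^ka^2(s)+\Pi_3(p)s^{2k}a(s)+\Pi_4(p)s^{3k}>0$, which after expanding $a(s)=1+s^k$ follows from the three coefficient inequalities \eqref{Piposs}; these hold for $p\geq p_k$ because $\Pi_1(p)=4p(p-1)(p-2)$ is of order $p^3$ while the remaining $\Pi_j$ are of lower order in $p$. Your sketch points in exactly this direction, and the uniformity over $w\in(0,\infty)$ that you flag as the main difficulty is resolved precisely by that grouping into powers of $s^k$ — but writing ``granting it'' means the quantitative heart of the theorem, the existence of $p_k$, is asserted rather than proved. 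Second, and more seriously as a matter of logic: your reduction lives on $[0,\infty)$ (you set $w=g(s)\geq0$), whereas \cite[Theorem 1.1]{pacellaweth} requires $h'(x,\cdot)$ convex on all of $\R$, since the solutions in question change sign. Evenness of $h'$ together with strict convexity on $(0,\infty)$ does \emph{not} imply convexity on $\R$: the even function $s\mapsto(|s|-1)^2$ is strictly convex on $(0,\infty)$ yet has a strict local maximum at the origin. The missing ingredient, supplied in the paper by Corollary~\ref{h1conv-2}, is that $h'$ is also increasing on $(0,+\infty)$ with $h'(0)=0$; this monotonicity comes from the convexity of $h$ itself on $(0,+\infty)$ (Proposition~\ref{conv-h2}, which is exactly where the hypothesis $p>k+1$ enters), and only then does writing $h'(s)=\phi(|s|)$ with $\phi$ convex and nondecreasing yield strict convexity on the whole line. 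Your plan never addresses this gluing across $s=0$, so as written it does not deliver the hypothesis under which the Pacella--Weth theorem can be applied to sign-changing solutions.
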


\begin{remark}\rm
Other types of nonlinearities $a$ and $f$ could be considered for which the
assertions of the previous theorems hold, such as exponential and logarithmic
type nonlinearities. The general idea is that the source $f$ should grow faster
than the quasi-linear diffusion $a$ as $s\to\infty$.
\end{remark}

\noindent
For a given $u:\Omega\to\R$, we denote by ${\rm nod}(u)\in\N\cup\{\infty\}$ the number
of connected components of $\Omega\setminus\{u^{-1}(0)\}$. We can now state the last result of this section.
It implies, in particular, that when the problem is autonomous,
index-one radial solutions have at most one nodal domain.

\begin{theorem}
	\label{nodal-quasi}
	Let $\Omega$ be a ball or an annulus in $\R^N$, $N\geq 2$,
	$a(s)=1+|s|^k$ with $k>1$, $\frac{k}{2}<p<\frac{(k+1)N+2}{N-2}$
	and let $u$ be any radial solution to
	\begin{equation}
		\label{autopb}
	\begin{cases}
	-\dvg(a(u)Du)+\frac{a'(u)}{2}|Du|^2=|u|^{p-1}u   & \text{in $\Omega$,} \\
	\noalign{\vskip2pt}
	\,u=0   & \text{on $\partial\Omega$.}
	\end{cases}
	\end{equation}
Then ${\rm nod}(u)\leq 1+\frac{m(u,J)}{N+1}$.
\end{theorem}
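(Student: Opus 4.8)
The plan is to reduce \eqref{autopb} to its semi-linear companion \eqref{prob-semil} and then to bound the Morse index of the reduced radial solution from below by $(N+1)({\rm nod}(u)-1)$.

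First I would set $v:=g^{-1}(u)$, so that $v$ is a radial $C^2(\Omega)\cap C(\closure{\Omega})$ solution of $-\Delta v=h(v)$ in $\Omega$, $v=0$ on $\partial\Omega$, with $h(s)=|g(s)|^{p-1}g(s)\,(1+|g(s)|^k)^{-1/2}$. Since $g$ is a strictly increasing homeomorphism of $\R$ with $g(0)=0$ and $u=g(v)$, the functions $u$ and $v$ share the same zero set and the same sign, whence ${\rm nod}(u)={\rm nod}(v)=:\mu$; moreover $m(u,J)=m(v,I)$ by Definition~\ref{morsedef}. The assertion is therefore equivalent to
\[
m(v,I)\ge (N+1)(\mu-1).
\]
As the cases $\mu\le 1$ are trivial (the right-hand side is then $\le 0\le m(v,I)$), I assume $\mu\ge2$; in the ball case $v(0)\ne0$, since $v(0)=0$ together with $v'(0)=0$ would force $v\equiv0$ by uniqueness for the radial equation.

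Next I would exploit that the potential $W(x):=h'(v(|x|))$ is radial, continuous and bounded on $\closure{\Omega}$ (as noted in the discussion preceding Definition~\ref{morsedef}), so that the Schr\"odinger operator $-\Delta-W$ commutes with rotations. Decomposing along spherical harmonics of degree $\ell$, whose multiplicity is $d_\ell$ (with $d_0=1$, $d_1=N$) and whose Laplace--Beltrami eigenvalue is $\lambda_\ell=\ell(\ell+N-2)$, one has $m(v,I)=\sum_{\ell\ge0}d_\ell\,n_\ell$, where $n_\ell$ is the number of negative Dirichlet eigenvalues on the radial interval of the one-dimensional operator
\[
L_\ell\phi=-\frac{1}{r^{N-1}}\big(r^{N-1}\phi'\big)'+\frac{\lambda_\ell}{r^2}\,\phi-h'(v)\,\phi .
\]
Because $\lambda_0<\lambda_1$, the associated quadratic forms satisfy $q_0\le q_1$ on every admissible $\phi$; hence any subspace on which $q_1$ is negative definite is admissible for $q_0$, where $q_0\le q_1<0$, giving $n_0\ge n_1$. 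Consequently $m(v,I)\ge d_0 n_0+d_1 n_1\ge(N+1)\,n_1$.

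The heart of the matter is then the lower bound $n_1\ge\mu-1$. Differentiating the radial identity $v''+\tfrac{N-1}{r}v'+h(v)=0$ shows that $v'$ solves $L_1(v')=0$. Since $v$ vanishes at the outer radius and changes sign at its $\mu-1$ interior zeros, there are at least $\mu-1$ radial intervals on which $v$ vanishes at both endpoints, and on each such interval $v'$ possesses an interior zero by Rolle's theorem; thus $v'$ has at least $\mu-1$ zeros in the open radial interval. Sturm's oscillation theorem then yields $n_1\ge\mu-1$: in the ball $v'(r)\sim v''(0)\,r$ is the solution regular at the origin, so the count is immediate, while in the annulus $v'$ need not vanish at the inner radius and one compares, via Sturm's separation theorem, the zeros of $v'$ with those of the solution satisfying the inner Dirichlet condition. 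Combining, $m(v,I)\ge(N+1)\,n_1\ge(N+1)(\mu-1)$, which rearranges to ${\rm nod}(u)=\mu\le 1+\tfrac{m(u,J)}{N+1}$. I expect the main obstacle to be precisely this last step --- converting the zero count of $v'$ into the eigenvalue count $n_1\ge\mu-1$ --- the delicate point being the inner-boundary behaviour in the annular case, where $v'$ fails the inner Dirichlet condition and the Sturm separation argument is required. A secondary technical point is the rigorous justification of the splitting $m(v,I)=\sum_\ell d_\ell n_\ell$ together with the admissibility of the degree-one test functions in the radial form $q_0$.
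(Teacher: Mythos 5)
Your first step coincides exactly with the paper's: set $v=g^{-1}(u)$, note ${\rm nod}(u)={\rm nod}(v)$ (as $g$ vanishes only at $0$) and $m(u,J)=m(v,I)$ by Definition~\ref{morsedef}, so that everything reduces to the semi-linear bound ${\rm nod}(v)\le 1+m(v,I)/(N+1)$. From there you genuinely diverge. The paper does not prove this bound at all: it verifies the structural hypotheses (f1)--(f4) of \cite{bartschdegio} --- notably that $s\mapsto h(s)/|s|$ is increasing on $\R^-$ and on $\R^+$, which the paper remarks holds for $p>k+1$ --- and then invokes \cite[Theorem 2.2]{bartschdegio} as a black box. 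You instead re-prove the semi-linear inequality directly: the spherical-harmonic splitting $m(v,I)=\sum_\ell d_\ell n_\ell$, the comparison $n_0\ge n_1$ coming from $\lambda_0<\lambda_1$, and the key estimate $n_1\ge{\rm nod}(v)-1$ obtained from the facts that $v'$ solves the $\ell=1$ linearized equation, Rolle's theorem on the nodal intervals, and Sturm oscillation (plus separation in the annulus, where $v'$ fails the inner Dirichlet condition). This is essentially the classical smooth-case argument of Aftalion--Pacella type, and it is sound modulo the standard technical points you yourself flag (rigorous justification of the splitting; oscillation theory at the singular origin in the ball; nondegeneracy of the interior zeros of $v$, which needs $h$ locally Lipschitz). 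The trade-off is instructive: the paper's proof is two lines but inherits the monotonicity hypothesis on $h(s)/|s|$, which, since $h(s)\sim c|s|^{(2p-k)/(k+2)}$ at infinity, genuinely requires $p>k+1$ rather than the stated $p>k/2$, and it must also reconcile the nonsmooth Morse index of \cite{bartschdegio} with Definition~\ref{morsedef}; your route uses no monotonicity of $h(s)/|s|$ at all, so it covers the whole range in which the linearization (and hence $m(v,I)$) makes sense --- effectively $p\ge1$, which your ODE-uniqueness and Hopf steps need anyway --- at the cost of carrying out the spectral and ODE analysis yourself rather than quoting it.
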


\subsection{Some convexity results}
\noindent
Assume now that, for each fixed $x\in\Omega$, the functions $s\mapsto a(s)$
and $s\mapsto f(x,s)$ are twice differentiable.
Observe that, by direct computation, we obtain
\begin{equation}
	\label{hprimo}
h'(x,s)=\frac{2f'(x,g(s))a(g(s))-f(x,g(s))a'(g(s))}{2a^2(g(s))},\qquad\text{for every $s\in\R$}.
\end{equation}
Furthermore, there holds
\begin{align}
	\label{hsecondo}
h''(x,s)=&\frac{1}{2}a^{-7/2}(g(s))\Big\{2f''(x,g(s))a^2(g(s))-3 f'(x,g(s))a'(g(s))a(g(s))  \notag \\
&-f(x,g(s))a''(g(s))a(g(s))+2f(x,g(s))(a'(g(s)))^2 \Big\},\qquad\text{for every $s\in\R$}.
\end{align}

\begin{proposition}
	\label{conv-h2}
Assume that $p>k+1$, $k\geq 2$, $\psi:\R^N\to \R^+$ is
a continuous function, and
\begin{equation}
		\label{expotpot}
f(x,s)=
\begin{cases}
\psi(x)s^p & \text{if $s\geq 0$},  \\
0 & \text{if $s<0$},
\end{cases}
\,\,\,\qquad
a(s)=1+|s|^k,\,\,\,\,\,\, s\in\R.
\end{equation}
Then the map $s\mapsto h(x,s)$ is convex on $\R$ and
strictly convex on $(0,+\infty)$ for all $x\in\Omega$.
\end{proposition}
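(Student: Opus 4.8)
The plan is to read off the sign of $h''(x,s)$ directly from the explicit formula \eqref{hsecondo}, splitting the analysis into the regions $s\le 0$ and $s>0$. Since $a(s)=1+|s|^{k}$ is even, the solution $g$ of \eqref{cauchy} is odd, so $g(s)\le 0$ for $s\le 0$; and because $p>k+1\ge 3$ the function $f(x,\cdot)$, extended by $0$ on the negative axis, is genuinely $C^{2}$ with $f(x,t)=f'(x,t)=f''(x,t)=0$ for every $t\le 0$. As each of the four terms in the braces of \eqref{hsecondo} carries a factor $f$, $f'$ or $f''$, they all vanish at $t=g(s)\le 0$, so $h''(x,s)=0$ (indeed $h(x,\cdot)\equiv 0$) on $(-\infty,0]$.

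For $s>0$ I would put $t:=g(s)>0$ and insert $f=\psi t^{p}$, $f'=p\psi t^{p-1}$, $f''=p(p-1)\psi t^{p-2}$, $a=1+t^{k}$, $a'=kt^{k-1}$, $a''=k(k-1)t^{k-2}$ into the braces. Factoring out the positive quantity $\tfrac12\,a^{-7/2}(t)\,\psi(x)\,t^{p-2}$ (here $\psi>0$), the sign of $h''(x,s)$ is governed by a quadratic in $y:=t^{k}>0$,
\[
Q(y)=Cy^{2}+By+A,
\]
whose coefficients collect as $A=2p(p-1)$, $B=4p(p-1)-3pk-k(k-1)$ and $C=2p(p-1)-3pk-k(k-1)+2k^{2}$. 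It then suffices to show $A,B,C>0$: this forces $Q(y)>0$ for all $y>0$, hence $h''(x,s)>0$ and strict convexity on $(0,+\infty)$; together with $h''\equiv 0$ on $(-\infty,0]$ and $h(x,\cdot)\in C^{2}(\R)$, one obtains $h''\ge 0$ throughout $\R$, i.e. convexity on $\R$.

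The heart of the matter is therefore the positivity of the three coefficients. That $A=2p(p-1)>0$ is immediate. For $C$, viewed as a quadratic in $p$, I expect the clean factorization $C=2\,(p-(k+1))\,(p-\tfrac{k}{2})$, which is positive since $p>k+1>\tfrac{k}{2}$. The subtle one is $B$: as an upward-opening quadratic in $p$ it has vertex at $p_{v}=(4+3k)/8$, and one checks both $p_{v}<k+1$ (equivalent to $5k+4>0$) and $B|_{p=k+1}=2k>0$; since the admissible range $p>k+1$ lies entirely to the right of the vertex, $B$ is increasing there and remains positive. I expect this verification of $B>0$ to be the main obstacle, and it is exactly where the hypotheses $p>k+1$ and $k\ge 2$ enter, the latter also ensuring $a\in C^{2}(\R)$ so that \eqref{hsecondo} is valid in the first place.
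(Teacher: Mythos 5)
Your proof is correct and follows essentially the same route as the paper: both reduce, via formula \eqref{hsecondo}, to the positivity of the quadratic $\Gamma_1 y^2+\Gamma_2 y+\Gamma_3$ in $y=t^k$ (your $C,B,A$ are exactly the paper's $\Gamma_1,\Gamma_2,\Gamma_3$), with the bracket vanishing on the negative axis since $f\equiv 0$ there. The only cosmetic difference is in checking $B=\Gamma_2>0$: you use a vertex/monotonicity argument with $B|_{p=k+1}=2k$, while the paper exhibits the algebraic factorization $\Gamma_2=(p-k-1)\bigl[4p+\tfrac{k(p-k+1)}{p-k-1}\bigr]$; both are valid.
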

\begin{proof}
Assume that $k\geq 2$ and $p>k+1$. In particular, the functions $a$ and $f(x,\cdot)$ are
of class $C^2$ on $\R$. Therefore, taking into account formula~\eqref{hsecondo}, we need to prove that
$$
2f''(x,s)a^2(s)-3 f'(x,s)a'(s)a(s) -f(x,s)a''(s)a(s)+2f(x,s)(a'(s))^2\geq 0,\qquad\text{for all $s\in\R$.}
$$
Hence, on account of~\eqref{expotpot}, this inequality is fulfilled on $\R^-$, and on $\R^+$ it reads as
$$
2p(p-1)s^{p-2}(1+s^k)^2-3pk s^{p+k-2}(1+s^k)-k(k-1)s^{p+k-2}(1+s^k)+2k^2 s^{p+2k-2}\geq 0.
$$
This can be rearranged as
$$
\Gamma_1s^{p+2k-2}+\Gamma_2s^{p+k-2}+\Gamma_3s^{p-2}\geq 0,\qquad\text{for all $s\geq 0$,}
$$
where we have set
$$
\Gamma_1=2p^2-(2+3k)p+k^2+k,\quad
\Gamma_2=4p^2-(4+3k)p-k(k-1),\quad
\Gamma_3=2p(p-1).
$$
Then, by assumption, $\Gamma_1=(p-k-1)(2p-k)>0$ and
$$
\Gamma_2=(p-k-1)\Big[4p+\frac{k(p-k+1)}{p-k-1}\Big]>0,
$$
concluding the proof.
\end{proof}

\begin{remark}\rm
	In the case $p<k+1$, in general the map $h$ fails to be convex. For instance, if $p=k=3$,
	Figure~\ref{fig:confronto} shows the plot of $h''$ becoming negative inside the range $[0,2]$.
	\begin{figure}[!ht]
	\includegraphics[width=3.2in]{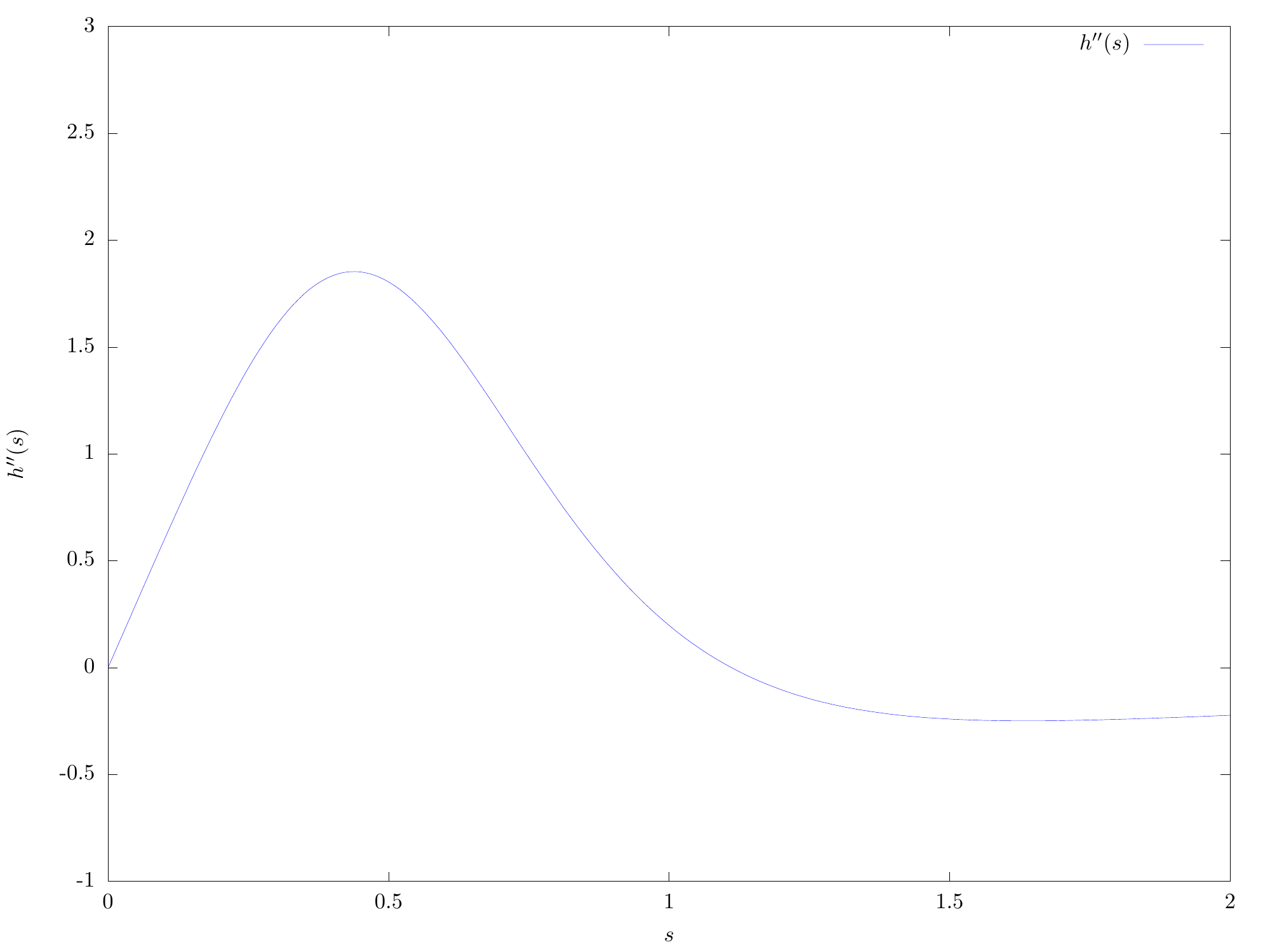}
	\caption{The figure shows that the second order derivative of $h$ becomes negative in the case $p=k=3$ and hence
	the convexity fails outside the range $p>k+1$.}
	\label{fig:confronto}
	\end{figure}
\end{remark}

\noindent
Observe now that, when $a$ and $f(x,\cdot)$ are of class $C^3$, from \eqref{hsecondo} we get
\begin{equation}
	\label{h3formul}
h'''(x,s)=\frac{1}{4}a^{-5}(g(s))\Big[2\Theta'(x,g(s)) a(g(s))-7a'(g(s))\Theta(x,g(s))\Big],
\end{equation}
where, for $x\in\Omega$ and $s\in\R$, we have set
\begin{align*}
\Theta(x,s)&:=2f''(x,s)a^2(s)-3 f'(x,s)a'(s)a(s)   \\
&-f(x,s)a''(s)a(s)+2f(x,s)(a'(s))^2,
\end{align*}
and, after some computations,
\begin{align*}
\Theta'(x,s)&=2f'''(x,s)a^2(s)
 +4a(s)a'(s)f''(x,s) \\
&-3f''(x,s)a'(s)a(s) -3f'(x,s)a''(s)a(s) \\
&-3f'(x,s)(a'(s))^2-f'(x,s)a''(s)a(s)  \\
&-f(x,s)a'''(s)a(s)-f(x,s)a''(s)a'(s)  \\
&+2f'(x,s)(a'(s))^2+4f(x,s)a'(s)a''(s).
\end{align*}

\noindent
Finally we can state the following convexity criterion for $h'$.

\begin{proposition}
	\label{h1conv-1}
Let $f$ and $a$ be as in \eqref{expotpot} with $k>1$.
Then the map $s\mapsto h'(x,s)$ is strictly convex on $(0,+\infty)$ for
all $p>2$ sufficiently large,
depending upon the value of $k$.
\end{proposition}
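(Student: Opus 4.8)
The plan is to show that the bracketed expression in \eqref{h3formul} is strictly positive on $(0,+\infty)$. Since $a^{-5}(g(s))>0$, strict convexity of $s\mapsto h'(x,s)$ on $(0,+\infty)$ is equivalent to $h'''(x,s)>0$ there, which by \eqref{h3formul} amounts to
$$2\Theta'(x,g(s))\,a(g(s))-7a'(g(s))\,\Theta(x,g(s))>0,\qquad s>0.$$
First I would pass to the variable $t=g(s)$. Since $g$ is a strictly increasing diffeomorphism of $\R$ with $g(0)=0$ and $g(+\infty)=+\infty$ (the latter because $a$ grows polynomially, so $g'=1/\sqrt{a\circ g}$ cannot keep $g$ bounded), $t$ ranges over $(0,+\infty)$ as $s$ does, and the task reduces to proving
$$2\Theta'(x,t)\,a(t)-7a'(t)\,\Theta(x,t)>0\qquad\text{for all }t>0.$$

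Next I would insert the explicit data $a(t)=1+t^k$, $a'(t)=kt^{k-1}$, $f(x,t)=\psi(x)t^p$. By the computation in the proof of Proposition~\ref{conv-h2}, on $(0,+\infty)$ one has $\Theta(x,t)=\psi(x)\big(\Gamma_1 t^{p+2k-2}+\Gamma_2 t^{p+k-2}+\Gamma_3 t^{p-2}\big)$ with $\Gamma_1,\Gamma_2,\Gamma_3$ as there; differentiating in $t$ gives $\Theta'(x,t)$ as the corresponding polynomial. Multiplying out $2\Theta'a-7a'\Theta$ and collecting powers of $t$, one finds, after routine algebra,
$$\frac{2\Theta'(x,t)a(t)-7a'(t)\Theta(x,t)}{\psi(x)}=C_1 t^{p+3k-3}+C_2 t^{p+2k-3}+C_3 t^{p+k-3}+C_4 t^{p-3},$$
where
$$C_1=\Gamma_1(2p-3k-4),\qquad C_4=2\Gamma_3(p-2),$$
$$C_2=2\Gamma_1(p+2k-2)+\Gamma_2(2p-5k-4),\qquad C_3=2\Gamma_2(p+k-2)+\Gamma_3(2p-7k-4).$$
Since $\psi>0$ and every power of $t$ is positive for $t>0$, it suffices to show that $C_1,C_2,C_3,C_4>0$.

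The last step is an asymptotic analysis in $p$. From Proposition~\ref{conv-h2}, $\Gamma_1=(p-k-1)(2p-k)\sim 2p^2$, $\Gamma_2\sim 4p^2$ and $\Gamma_3=2p(p-1)\sim 2p^2$ as $p\to+\infty$, whence $C_1\sim 4p^3$, $C_4\sim 4p^3$, while $C_2\sim 4p^3+8p^3=12p^3$ and $C_3\sim 8p^3+4p^3=12p^3$; in each case the leading $p^3$ coefficient is strictly positive. Hence there is $p_\ast=p_\ast(k)$ such that $C_1,C_2,C_3,C_4>0$ for all $p>p_\ast$, and the claim follows. I expect the main obstacle to be the two two-term coefficients $C_2$ and $C_3$: unlike the monomial coefficients $C_1,C_4$, they are not manifestly positive, and one must verify that the two contributions—each of order $p^3$—do not cancel. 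They do not, precisely because both leading coefficients turn out positive; this is exactly why the conclusion is claimed only for $p$ large depending on $k$, rather than for all $p>k+1$ as in Proposition~\ref{conv-h2}.
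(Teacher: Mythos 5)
Your proposal is correct and follows essentially the same route as the paper: both reduce, via \eqref{h3formul}, to showing $2\Theta'a-7a'\Theta>0$ on $(0,+\infty)$, expand this (after factoring out $\psi(x)\,t^{p-3}$) into a polynomial in $t^{k}$ whose coefficients are polynomials in $p$, and conclude because each coefficient has positive leading term of order $p^{3}$; indeed your $C_{1}t^{3k}+C_{2}t^{2k}+C_{3}t^{k}+C_{4}$ coincides term-by-term with the paper's $Q_{p}$ (e.g.\ $C_{4}=2\Gamma_{3}(p-2)=\Pi_{1}$). Your reuse of the $\Gamma_{i}$ from Proposition~\ref{conv-h2} is a tidier bookkeeping than the paper's expansion of $\Theta'$ from scratch, and your explicit leading coefficients $4p^{3},12p^{3},12p^{3},4p^{3}$ sharpen the paper's ``$\Pi_{1}$ dominates'' argument, but the underlying method is the same; the only nitpick is that $h'''>0$ is sufficient, not equivalent, to strict convexity of $h'$.
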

\begin{proof}
Notice first that the functions $a(\cdot)$ and $f(x,\cdot)$ can be
differentiated three times (and more) on the positive real line $(0,+\infty)$.
On account of formula \eqref{h3formul}, we need to prove that
$$
2\Theta'(x,s) a(s)-7a'(s)\Theta(x,s)> 0,\qquad\text{for all $s>0$.}
$$
Observe now that this means
\begin{align*}
& 4f'''(x,s)a^{3}(s)
 +8a^{2}(s)a'(s)f''(x,s)
-6f''(x,s)a'(s)a^{2}(s) -6f'(x,s)a''(s)a^{2}(s) \\
&-6f'(x,s)(a'(s))^2a(s)-2f'(x,s)a''(s)a^{2}(s)
-2f(x,s)a'''(s)a^{2}(s)-2f(x,s)a''(s)a'(s)a(s)  \\
&+4f'(x,s)a(s)(a'(s))^2+8f(x,s)a'(s)a''(s)a(s)
 -14f''(x,s)a^2(s)a'(s)+21f'(x,s)(a'(s))^2a(s)   \\
&+7f(x,s)a''(s)a'(s)a(s)-14f(x,s)(a'(s))^3>0,\qquad\text{for all $s>0$,}
\end{align*}
equivalently, since $\psi(x)\geq 0$,
\begin{align*}
& 4p(p-1)(p-2)s^{p-3}a^{3}(s)
 +8p(p-1)ks^{p+k-3}a^{2}(s)
-6p(p-1)ks^{p+k-3}a^{2}(s) \\
&-6pk(k-1)s^{p+k-3}a^{2}(s)
-6pk^2s^{p+2k-3}a(s)-2pk(k-1)s^{p+k-3}a^{2}(s) \\
&-2k(k-1)(k-2)s^{p+k-3}a^{2}(s)-2k^2(k-1)s^{p+2k-3}a(s)  \\
&+4pk^2s^{p+2k-3}a(s)+8k^2(k-1)s^{p+2k-3}a(s)
 -14kp(p-1)s^{p+k-3}a^2(s)+21pk^2s^{p+2k-3}a(s)   \\
&+7k^2(k-1)s^{p+2k-3}a(s)-14k^3s^{p+3k-3}>0,\qquad\text{for all $s>0$,}
\end{align*}
that is
\begin{align*}
& 4p(p-1)(p-2)a^{3}(s)
 +8p(p-1)ks^{k}a^{2}(s)
-6p(p-1)ks^{k}a^{2}(s) \\
&-6pk(k-1)s^{k}a^{2}(s)
-6pk^2s^{2k}a(s)-2pk(k-1)s^{k}a^{2}(s) \\
&-2k(k-1)(k-2)s^{k}a^{2}(s)-2k^2(k-1)s^{2k}a(s)  \\
&+4pk^2s^{2k}a(s)+8k^2(k-1)s^{2k}a(s)
 -14kp(p-1)s^ka^2(s)+21pk^2s^{2k}a(s)   \\
&+7k^2(k-1)s^{2k}a(s)-14k^3s^{3k}>0,\qquad\text{for all $s>0$,}
\end{align*}
that is
\begin{equation*}
\Pi_1(p) a^{3}(s)+\Pi_2(p)s^{k}a^{2}(s)+  \Pi_3(p)s^{2k}a(s)+\Pi_4(p) s^{3k}>0,\qquad\text{for $s>0$,}
\end{equation*}
where
\begin{align*}
&\Pi_1(p):=4p(p-1)(p-2), \\	
&\Pi_2(p):=-12kp(p-1)-8pk(k-1)-2k(k-1)(k-2),\\
&\Pi_3(p):=k^2(19p+13k-13), \\
&\Pi_4(p):=-14k^3.
\end{align*}
In turn this is fulfilled, for $k>1$, provided that $Q_p(s)>0$ for $s>0$, where
\begin{align*}
Q_p(s):=(\Pi_1(p)+\Pi_2(p)+\Pi_3(p)
&+\Pi_4(p))\cdot s^{3k}+(3\Pi_1(p)+2\Pi_2(p)+\Pi_3(p))\cdot s^{2k} \\
&+(3\Pi_1(p)+\Pi_2(p))\cdot s^k+\Pi_1(p).
\end{align*}
Taking into account that
$\Pi_1(p)={\mathcal O}(p^3)$ and $\Pi_j(p)=o(p^3)$ as $p\to\infty$
for all $j=2,\dots,6$, in turn there exists $p_k>2$ such that
for every $p\geq p_k$ it holds
\begin{equation}
	\label{Piposs}
\Pi_1(p)+\Pi_2(p)+\Pi_3(p)+\Pi_4(p)>0,\,\,\,
3\Pi_1(p)+2\Pi_2(p)+\Pi_3(p)>0,\,\,\,
3\Pi_1(p)+\Pi_2(p)>0.
\end{equation}
Then $Q_p(s)>0$ for all $s>0$, yielding the positivity of $h'''(s)$
for $s>0$ and, hence, the strict convexity of $h'$ on $[0,+\infty)$.
This concludes the proof.
\end{proof}

\noindent
Concerning the usual power nonlinearity $f(x,s)=\psi(x)|s|^{p-1}s$, we have the following

\begin{corollary}
	\label{h1conv-2}
Let $p>\max\{2,k+1\}$, $f(x,s)=\psi(x)|s|^{p-1}s$ for all $s\in\R$, where $\psi:\R^N\to \R^+$ is
continuous, and let $a(s)=1+|s|^k$ with $k>1$.
Then $s\mapsto h'(x,s)$ is strictly convex on $\R$
for every $x\in\Omega$ for any $p>2$ sufficiently large,
depending on the value of $k$.
\end{corollary}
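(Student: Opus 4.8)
The plan is to reduce Corollary~\ref{h1conv-2} to Proposition~\ref{h1conv-1} by exploiting the oddness symmetry of the power nonlinearity $f(x,s)=\psi(x)|s|^{p-1}s$ together with the oddness of $g$. The key observation is that on $(0,+\infty)$ the function $f(x,s)=\psi(x)s^p$ coincides exactly with the nonlinearity treated in Proposition~\ref{h1conv-1}, so the strict convexity of $s\mapsto h'(x,s)$ on $(0,+\infty)$ is already established there for all $p\geq p_k$, with no extra work needed.

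First I would record the symmetry structure. Since $a(s)=1+|s|^k$ is even, the solution $g$ of the Cauchy problem \eqref{cauchy} is odd, so $g(-s)=-g(s)$ and $a(g(-s))=a(g(s))$. Because $f(x,\cdot)$ is now odd rather than one-sided, the map $s\mapsto h(x,s)=f(x,g(s))a^{-1/2}(g(s))$ becomes an \emph{odd} function of $s$: indeed $f(x,g(-s))=f(x,-g(s))=-f(x,g(s))$ while $a^{-1/2}(g(-s))=a^{-1/2}(g(s))$. An odd $h$ gives $h'$ even and $h'''$ even as well, so the sign of $h'''(x,s)$ on $(-\infty,0)$ is dictated by its sign on $(0,+\infty)$. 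Concretely, from parity, $h'''(x,-s)=-h'''(x,s)$; hence $h'''(x,s)>0$ for $s>0$ forces $h'''(x,s)<0$ for $s<0$, which means $h'$ is strictly convex on $(0,+\infty)$ and strictly concave on $(-\infty,0)$. The remaining task is therefore to upgrade \emph{strict convexity on the half-line} to \emph{strict convexity on all of $\R$}, and the honest statement of the corollary is convexity of $h'$ interpreted through the global behavior once the two half-lines are glued at the origin.

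Next I would handle the gluing at $s=0$, which is the only genuinely new point. For the power nonlinearity, $f(x,\cdot)$ is $C^2$ across $0$ only when $p>\max\{2,k+1\}$ is large enough that enough derivatives vanish at the origin; this is exactly why the hypothesis $p>\max\{2,k+1\}$ (and $p$ large) appears. I would verify that $h$, $h'$, and $h''$ all extend continuously through $0$ with $h'(x,0)=0$ and that $h'$ is convex in a neighborhood of $0$ by checking the one-sided behavior of $h''$ and $h'''$ there, using the explicit formulas \eqref{hprimo}, \eqref{hsecondo}, and \eqref{h3formul} evaluated as $s\to 0^{\pm}$. The evenness of $h'$ means $h''(x,0)=0$ and $h'$ has a minimum at the origin, consistent with global convexity, so no concavity is introduced at the junction.

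\textbf{The main obstacle} I expect is precisely this matching at the origin: away from $0$ the inequality $Q_p(s)>0$ from Proposition~\ref{h1conv-1} transfers immediately (by parity to $s<0$ and verbatim to $s>0$), but one must confirm that the even function $h'$ does not lose convexity at the single point $s=0$ where the odd extension is stitched together. The cleanest way around this is to observe that an even function that is strictly convex on $(0,+\infty)$ with a minimum at $0$ is automatically convex on all of $\R$; strict convexity on $\R$ then holds provided the left and right second derivatives agree at $0$ (ensured by the $C^2$-regularity forced by $p>\max\{2,k+1\}$), so that $h'$ has no downward kink at the origin. I would therefore conclude by invoking Proposition~\ref{h1conv-1} for $s>0$, parity for $s<0$, and the regularity at $s=0$ to assemble strict convexity of $s\mapsto h'(x,s)$ on all of $\R$, for every $x\in\Omega$ and every sufficiently large $p$ depending on $k$.
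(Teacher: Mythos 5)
Your overall strategy is the same as the paper's: use oddness of $f$ and evenness of $a$ to get $g$ odd, hence $h$ odd and $h'$ even, invoke Proposition~\ref{h1conv-1} on $(0,+\infty)$, and then glue the two half-lines at the origin. However, your middle paragraph contains a genuine error which, taken at face value, would make the proof impossible. You first (correctly) note that $h$ odd implies $h'''$ even, but then you write that ``from parity, $h'''(x,-s)=-h'''(x,s)$'' and conclude that $h'$ is \emph{strictly concave} on $(-\infty,0)$. This is wrong and contradicts your own previous sentence: $h$ odd gives $h'$ even, $h''$ odd, $h'''$ even (test with $h(s)=s^3$, for which $h'(s)=3s^2$ is convex on both half-lines). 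If $h'$ really were strictly concave on $(-\infty,0)$, no gluing at $s=0$ could produce a function convex on all of $\R$, and the corollary would be false. The correct statement, which the paper uses, is that positivity of $h'''$ on $(0,+\infty)$ transfers with the \emph{same} sign to $(-\infty,0)$; equivalently, $h'$ restricted to $(-\infty,0)$ is the reflection of a strictly convex function and hence strictly convex.

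Second, your gluing step is under-justified at one point: you claim that ``the evenness of $h'$ means \dots\ $h'$ has a minimum at the origin.'' Evenness alone gives only a symmetric critical point (the even function $-s^2$ has a maximum at $0$); the minimum must come from monotonicity of $h'$ on $(0,+\infty)$. This is exactly where the paper brings in Proposition~\ref{conv-h2}: since $p>k+1$, the map $s\mapsto h(x,s)$ is convex on $\R$ and strictly convex on $(0,+\infty)$, so $h'$ is increasing on $(0,+\infty)$, and then the standard fact that an even function, convex and nondecreasing on $[0,+\infty)$, is convex on $\R$ (via $h'(s)=h'(|s|)$) completes the gluing, with strictness coming from the strict monotonicity and strict convexity on the half-line. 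Your alternative route --- $h''(x,0)=0$ by evenness, plus $h'''>0$ on $(0,+\infty)$, gives $h''>0$ and hence $h'$ increasing there --- also works, and this is where the hypothesis $p>\max\{2,k+1\}$ guarantees that $h''$ extends continuously by $0$ at the origin; but as written the minimum at $0$ is asserted rather than derived. Once the erroneous concavity claim is deleted and the minimum at $0$ is justified by one of these two arguments, your proof coincides with the paper's.
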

\begin{proof}
	The assertion follows from Proposition~\ref{h1conv-1} after observing
	that, since $f$ is odd and $a$ is even (and hence $g$ is odd), the function $h$ is odd
	and, in turn, $h'$ is even with $h'(0)=0$. Hence $h'$ is strictly convex both on $(-\infty,0)$ and
	on $(0,+\infty)$ and hence on $\R$ since $h'$ is increasing
	on $(0,+\infty)$ as $p>k+1$ in light of Proposition~\ref{conv-h2}.
\end{proof}

\begin{remark}\rm
Explicit conditions on the magnitude of $p$ with respect to $k$
that guarantees the validity of the assertion of Proposition~\ref{h1conv-1}
can either obtained by solving directly the inequalities in \eqref{Piposs}
or searching for the absolute minimum point $s_\sharp>0$ of $Q_p$ on $(0,+\infty)$
which satisfy the quadratic equation for the unknown $\Xi=\Xi(p,k):=s_\sharp^k>0$
$$
3(\Pi_1(p)+\Pi_2(p)+\Pi_3(p)+\Pi_4(p))\Xi^2+(6\Pi_1(p)+4\Pi_2(p)+2\Pi_3(p))\Xi
+(3\Pi_1(p)+\Pi_2(p))=0,
$$
and finally imposing $Q_p(s_\sharp)>0$.
In the semi-linear (corresponing to the case where $a$ is a constant),
it follows that $h(x,s)=\psi(x)|s|^{p-1}s$
so that the requirement $p>2$ is necessary
for $s\mapsto h'(x,s)$ to be strictly convex on $\R$.
Figures~\ref{fig:confronto5} and \ref{fig:confronto6} show how
$h'''$ is pushed from negative to positive values provided that the value of $p$
is large enough in terms of $k$ ($k=2$ and $p=3.2,4,5,7$ respectively). For instance,
if the dimension $N$ is equal to $3$, the values of $p$ such that $h'''>0$
are below the threshold $3k+5=((k+1)N+2)/(N-2)$ appearing in \eqref{condit-growth}
for the growth of $f$ which makes the problem \eqref{prob-semil} subcritical and, thus, nice
for the existence theory via variational methods.
\end{remark}

	\begin{figure}
	\centering
	\mbox{\subfigure{\includegraphics[width=2.9in]{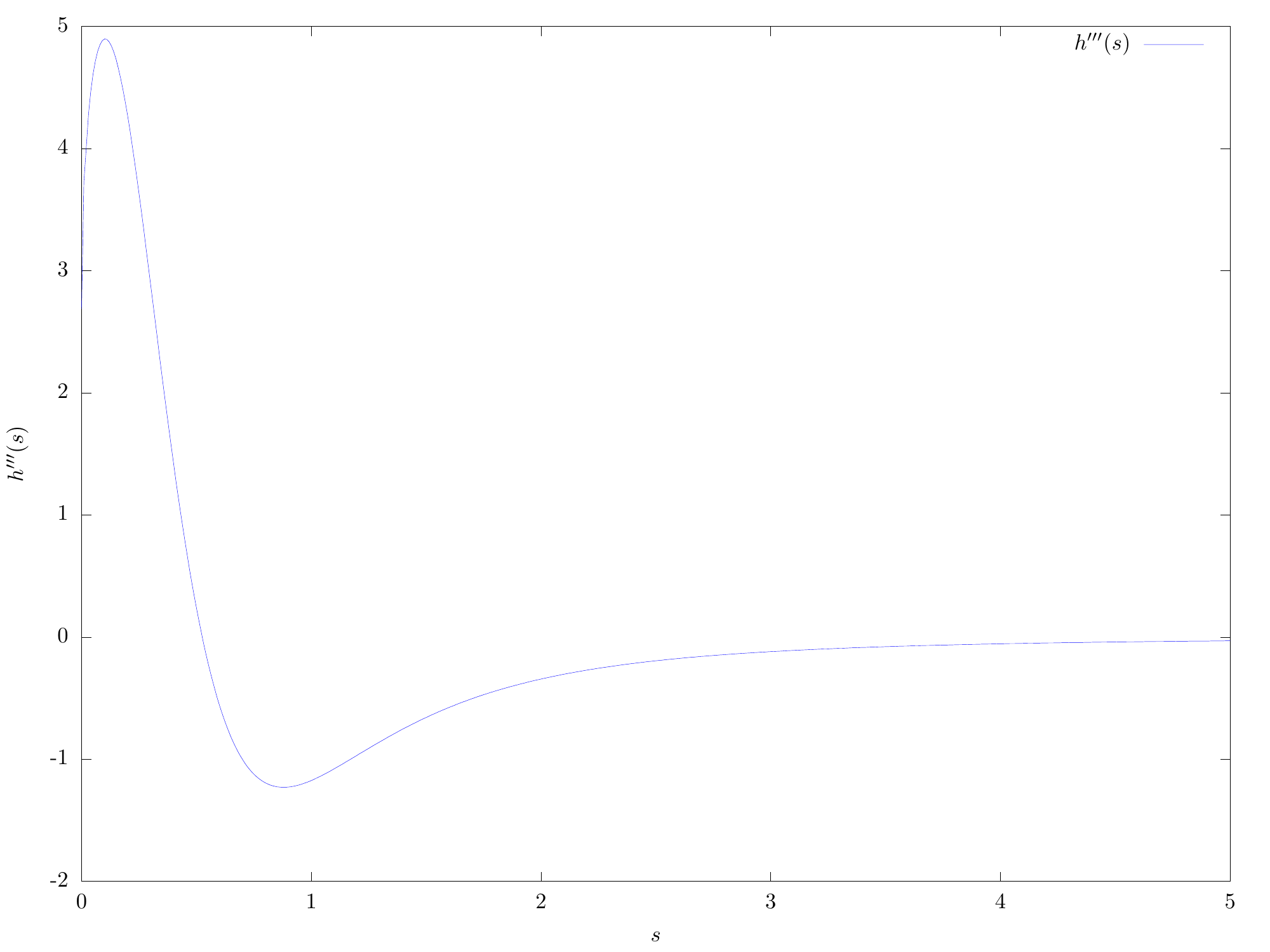}
	\quad
	\subfigure{\includegraphics[width=2.9in]{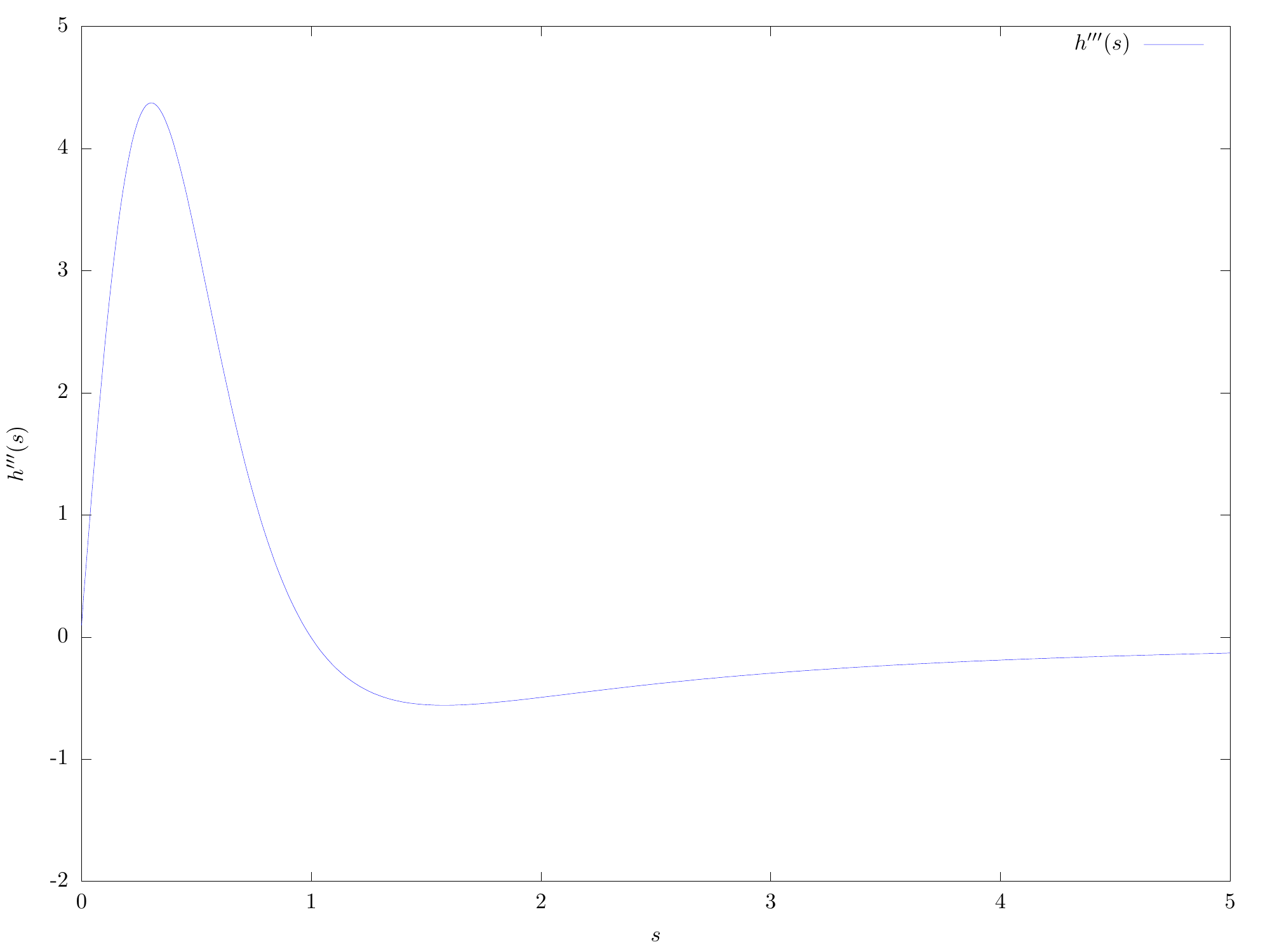} }}}
	\caption{The figure shows the plot of $h'''$ in the case $p=3.2$ (left) and $p=4$ (right) for $k=2$.}
	\label{fig:confronto5}
	\end{figure}

	\begin{figure}
	\centering
	\mbox{\subfigure{\includegraphics[width=2.9in]{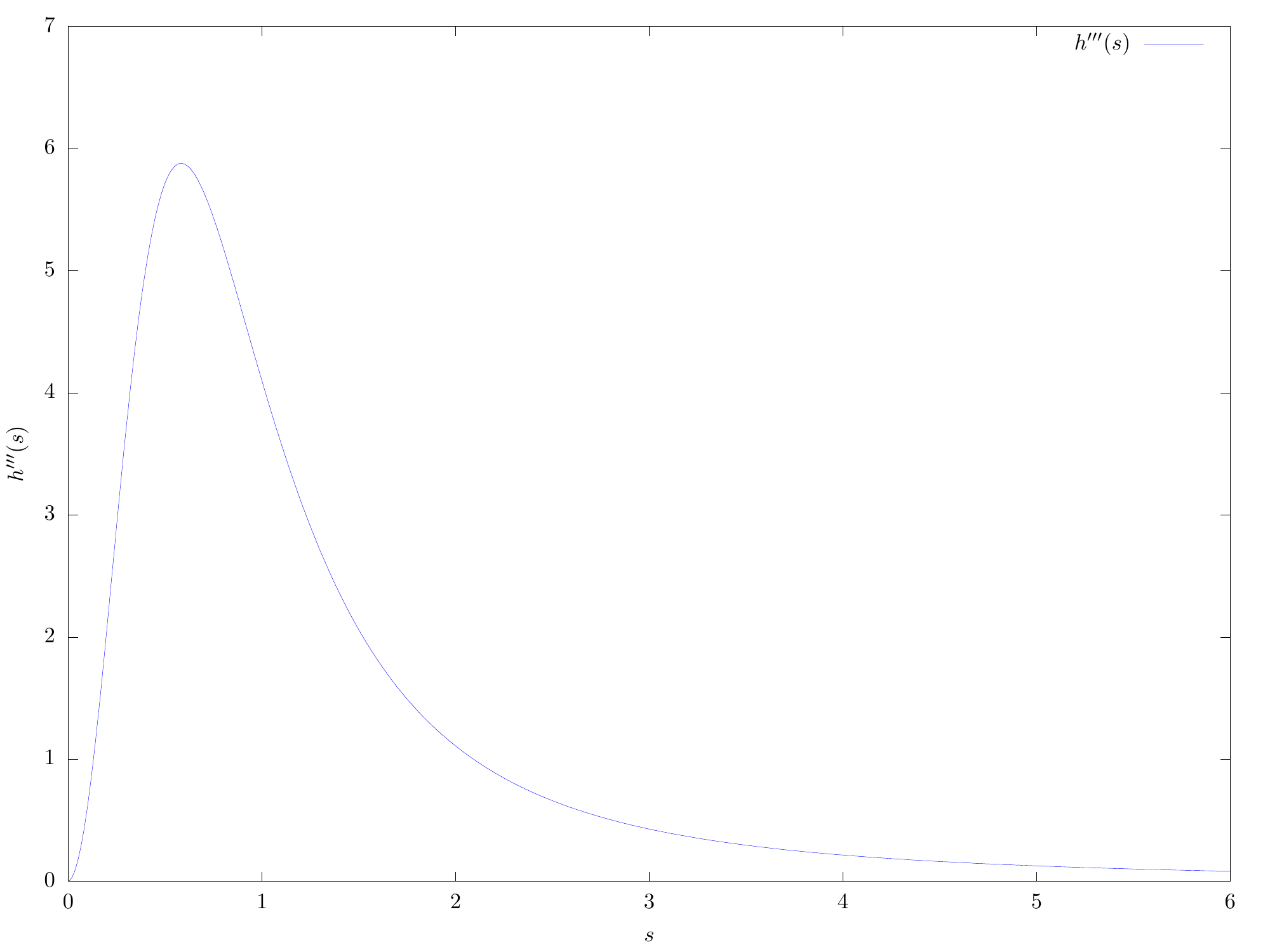}
	\quad
	\subfigure{\includegraphics[width=2.9in]{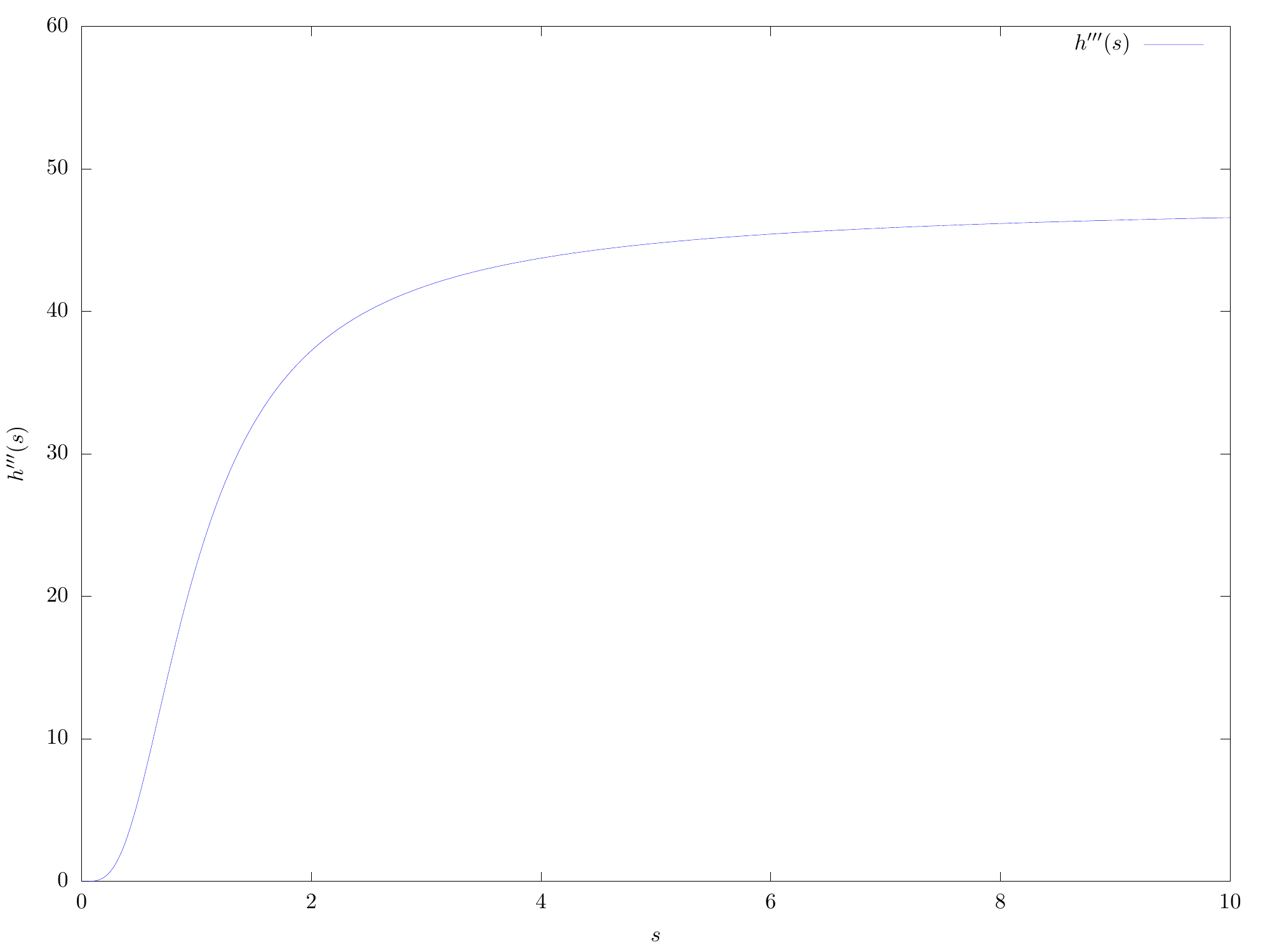} }}}
	\caption{The figure shows the plot of $h'''$ in the case $p=5$ (left) and $p=7$ (right) for $k=2$.}
	\label{fig:confronto6}
	\end{figure}

\subsection{Proofs of Theorems~\ref{primapro0}, \ref{primapro-000} and \ref{primapro}}
We are now ready to prove the previously stated symmetry results
for the quasi-linear problem.

\subsubsection{Proof of Theorem~\ref{primapro0}}

Given a (smooth) solution $u$ to \eqref{prosym00}, setting $v=g^{-1}(u)$, it
follows that $v$ is a smooth solution to $-\Delta v=h(x,v)$. Of course, for every $s\in\R$,
the function $h(\cdot,s)$ is continuous, even in the
$x_1$-variable and increasing in the $x_1$-variable in $\{x\in\Omega:x_1<0\}$. Now, in light
of Proposition~\ref{conv-h2}, it follows that the map $s\mapsto h(x,s)$ is
strictly convex on $(0,+\infty)$ for all $x\in\Omega$. Hence, by combining
\cite[Propositions 1.1 and 2.1]{pacella}, it follows that $v$ is symmetric with respect to $x_1$.
Therefore this yields
$$
u(-x_1,x_2,\dots,x_N)=g(v(-x_1,x_2,\dots,x_N))=g(v(x_1,x_2,\dots,x_N))=u(x_1,x_2,\dots,x_N),
$$
concluding the proof. \qed

\subsubsection{Proof of Theorem~\ref{primapro-000}}

	Let $u$ be a positive (smooth) index one solution to problem~\eqref{prosym00}. Hence, $v=g^{-1}(u)$ is a
	(smooth) solution to $-\Delta v=h(x,v)$. By virtue of Definition~\ref{morsedef}, it follows that $v$ has index $1$.
	Observe that $D_j u(x)=g'(v(x))D_j v(x)$ and
	$D^2_{ij}u(x)=g''(v(x))D_iv(x)D_jv(x)+g'(v(x))D^2_{ij} v(x)$ for all $x\in\Omega$ and any $i,j=1,\dots,N$.
	Since $g'>0$, $x_0$ is a critical point of $v$ if and only if $x_0$
	is a critical point of $u$, in which case
	${\mathcal H}_u(x_0)=g'(v(x_0)){\mathcal H}_v(x_0)$,
	where ${\mathcal H}_z(y)$ denotes the Hessian matrix of $z$ at $y$.
In fact, $P$ is a maximum point for $v$ also, since $v(\xi)=g^{-1}(u(\xi))\leq g^{-1}(u(P))=v(P)$
for all $\xi\in\Omega$, $g^{-1}$ being strictly increasing.
	On account of Proposition~\ref{conv-h2}, the proofs of assertions (1)-(3) follow as in
	the proof of Theorem~\ref{primapro0} by applying \cite[Theorem 3.1 (i), (ii)
	and (iii)]{pacella}. Concerning assertion (4), assume that $u$
	is not radially symmetric. Hence, $v=g^{-1}(u)$ is a
	nonradial (smooth) solution to $-\Delta v=h(x,v)$.
	Whence, by \cite[Theorem 3.1(4)]{pacella}, all its critical points
	belong to the symmetry axis $r_p$, that is to say $D_jv(\xi)=0$ implies $\xi\in r_P$.
	Since $D_j u(\xi)=g'(v(\xi))D_j v(\xi)$ for all $j$ and
	$g'>0$, $D_j u(\xi)=0$ implies $D_jv(\xi)=0$. Hence $\xi\in r_P$
	and the proof is complete. \qed

\subsubsection{Proof of Theorem~\ref{primapro}}

Let $u$ be any (smooth) solution to \eqref{prosym1} with Morse index $m(u,J)\leq N$.
Therefore, setting $v=g^{-1}(u)$, by Definition~\ref{morsedef}, $v$ is a smooth solution to $-\Delta v=h(|x|,v)$
with Morse index $m(v,I)\leq N$. In light of Corollary~\ref{h1conv-2}, the function $s\mapsto h(|x|,s)$
has a (strictly) convex derivative on $\R$ provided that $p$ is sufficiently large, depending on $k$. Then,
by virtue of \cite[Theorem 1.1]{pacellaweth}, it follows that $v$ is foliated Schwarz symmetric,
namely, there exists a unit vector $\xi\in\R^N$ such that $v(x)=\eta(|x|,\xi\cdot x)$ for some
function $\eta:\R^+\times\R\to\R$ such that $\eta(r,\cdot)$ is nondecreasing for any $r\geq 0$.
Then $u=(g\circ\eta)(|x|,\xi\cdot x)$ and $D_s (g\circ \eta)(|x|,s)=g'(\eta(|x|,s))D_s\eta(|x|,s)\geq 0$
since $g'>0$ on $\R$. This concludes the proof of the first assertion. The second assertion follows
by arguing analogously using~\cite[Theorem 1.2]{pacellaweth}. \qed

\subsubsection{Proof of Theorem~\ref{nodal-quasi}}
Let $u$ be any (smooth) radial solution to problem~\eqref{autopb}.
Then, setting $v=g^{-1}(u)$, it follows that $v$ is a (smooth) radial
solution to $-\Delta v=h(v)$. It is
readily seen that $h$ satisfies the requirement (f1)-(f4) (see the proof of \cite[Proposition 2.3]{glasqu}) needed
to apply \cite[Theorem 2.2]{bartschdegio}. In particular, for $p>k+1$, the map $s\mapsto h(s)/|s|$ is increasing
on $\R^-$ and on $\R^+$. Therefore by \cite[Theorem 2.2]{bartschdegio} it follows that
${\rm nod}(v)\leq 1+\frac{m(v,I)}{N+1}$. Recalling that $m(u,J)=m(v,I)$
and ${\rm nod}(u)={\rm nod}(v)$ (since $g$ vanishes only at $s=0$), the conclusion follows. \qed

\bigskip
\bigskip
\medskip

\bigskip
\bigskip
\end{document}